 \newcommand{\lvec}[1]{\overleftarrow{#1}}
\def\lcf{\lbrack\! \lbrack}
\def\rcf{\rbrack\! \rbrack}
\newtheorem{thm}{Theorem}[section]
\newtheorem{prop}[thm]{Proposition}
\newtheorem{definition}[thm]{Definition}
\newtheorem{lemma}[thm]{Lemma}
\newtheoremstyle{obs}
  {3pt}
  {3pt}
  {}
  {}
  {\bfseries}
  {.}
  {.5em}
  {}
\theoremstyle{obs}
\newtheorem{remark}[thm]{Remark}
\newtheorem{ex}[thm]{Example}
\newcommand{\R}{\mathbb{R}}      
\def\qed{\ifvmode\removelastskip\fi
{\unskip\nobreak\hfil\penalty50\hbox{}\nobreak\hfil \hbox{\vrule
height1.2ex width1.2ex}\parfillskip=0pt \finalhyphendemerits=0
\par \smallskip}}
\title{Inverse problem for Lagrangian systems on Lie algebroids and applications to  reduction by symmetries}
\author{\textsc{Mar\'ia Barbero-Li\~n\'an}\thanks{mbarbero@math.uc3m.es}\\
\small
Departamento de Matem\'aticas, Universidad Carlos III de Madrid, \\ \small Avenida de la Universidad 30, 28911 Legan\'es, Madrid, Spain
\\ \small and Instituto de Ciencias Matem\'aticas (CSIC-UAM-UC3M-UCM)\\ \and
\textsc{Marta Farr\'e Puiggal\'i \thanks{marta.farre@icmat.es},}
\textsc{David Mart\'{\i}n de Diego}\thanks{david.martin@icmat.es} \\
\small
Instituto de Ciencias Matem\'aticas (CSIC-UAM-UC3M-UCM) \\ \small  C/Nicol\'as
Cabrera 13-15, 28049 Madrid, Spain
}
\begin{document}

\maketitle

\begin{abstract}
The language of Lagrangian submanifolds is used to extend a geometric characterization of the inverse problem 
of the calculus of variations on tangent bundles to regular Lie algebroids. Since not all closed sections are locally exact on Lie algebroids, the Helmholtz conditions on Lie algebroids 
are necessary but not sufficient, so they give a weaker definition of the
inverse problem. As an application the Helmholtz conditions on Atiyah algebroids are obtained so that the
relationship between the inverse problem and the reduced inverse problem by symmetries can be described. Some examples and comparison with previous 
approaches in the literature are provided. 

\vspace{3mm}

\textbf{Keywords:} Lie algebroids, SODE section, inverse problem, Atiyah algebroid.

\vspace{3mm}

\textbf{2010 Mathematics Subject Classification:} 37J99; 49N45; 53C15; 53D12; 70H03
\end{abstract}

\newpage
\tableofcontents
\newpage

\section{Introduction}

The inverse problem of the calculus of variations consists in determining if the solutions of a given system
of second order differential equations correspond with the solutions of the Euler-Lagrange equations for some regular Lagrangian.
 This problem in the general version remains unsolved. In~\cite{BFM} we contribute to it with a novel description
in terms of Lagrangian submanifolds of a symplectic manifold, also valid to study the constrained inverse problem by using
 isotropic submanifolds instead of Lagrangian ones. Here we extend the use of Lagrangian submanifolds to geometrically characterize 
the inverse problem on regular Lie algebroids, giving a different approach from~\cite{POP} and extending 
the results for Lie algebras described in~\cite{CM2008}. 

On Lie algebroids the role of the SODE (second order differential equation) is played by a SODE section \cite{LMM,Martinez}. Locally, $\ddot{x}^{i}=\Gamma^{i}(x,\dot{x})$ is replaced
by $\dot{x}^{i}=\rho^{i}_{\alpha}(x)y^{\alpha}, \dot{y}^{\alpha}=\Gamma^{\alpha}(x,y)$, where $(x^i,y^\alpha)$ are local coordinates on 
a Lie algebroid $E$. The inverse problem on Lie algebroids poses the same question as in the classical inverse problem~\cite{HIRSCH,Sonin1886,Helmholtz1887}: 
When is the above system equivalent to the Euler-Lagrange equations for some regular
 Lagrangian? More precisely, when is it possible to find a nondegenerate matrix of multipliers $g_{\alpha\beta}(x,y)$ such that
\begin{equation*} 
g_{\alpha\beta}(\dot{y}^{\alpha}-\Gamma^{\alpha})=\frac{d}{dt}\left( \frac{\partial L}{\partial y^{\beta}} \right)-\rho^{i}_{\beta}\frac{\partial L}{\partial x^{i}}+C^{\gamma}_{\beta\nu}y^{\nu}\frac{\partial L}{\partial y^{\gamma}}
\end{equation*}
has a regular solution $L$? 

This problem for a system of second order ordinary differential equations $\ddot{x}^{i}=\Gamma^{i}(x,\dot{x})$ has an 
extensive literature. The question was first raised by Hirsch in 1898 \cite{HIRSCH} and the main contribution 
to the problem was made by Douglas in \cite{Douglas} for the $2$-dimensional case giving an almost complete classification by studying a set of necessary and sufficient conditions in terms of the multipliers. 
A geometric interpretation of this set of necessary and sufficient conditions, so called Helmholtz conditions, was given 
in~\cite{94CSMBP} in terms of the Jacobi endomorphism, the dynamical covariant derivative and the vertical covariant derivative. Some particular cases in Douglas' classification 
have been generalized to arbitrary dimension but no classification close to the one given by Douglas is known in any 
dimension higher than $2$.

Note that the inverse problem of the calculus of variations also refers to the problem of matching a system of second order 
differential equations with the Euler-Lagrange equations for a regular Lagrangian in such a way that the matrix of multipliers is
equal to the identity matrix. This problem was posed earlier by Helmholtz in 1887 
who provided a set of necessary and sufficient conditions \cite{Helmholtz1887}. 

In the nineties of the last century two important contributions show how Lie algebroids and Lie groupoids~\cite{MAC} are very useful to 
describe Lagrangian mechanics~\cite{96Liber,WEINST}. From then on, the benefits of Lie algebroids to describe Lagrangian 
and Hamiltonian mechanics have become very clear in the literature~\cite{LMM,Martinez} and references therein. For instance, using the Atiyah algebroid framework,  Lagrange-Poincar\'e
and Hamilton-Poincar\'e equations are obtained very naturally~\cite{2007IMMP}.

The paper is organized as follows. In section \ref{background} we give the necessary 
background on the theory of Lie algebroids, including prolongations of Lie algebroids, the Tulczyjew isomorphism and symplectic Lie algebroids. In section \ref{sections} we discuss the lack of the Poincar\'{e} lemma 
for the differential associated to  general Lie algebroids and give a characterization of locally exact sections of the dual of a regular Lie algebroid. This 
is a key lemma in section \ref{inverse}. In section \ref{mechanics} we review the derivation of the Euler-Lagrange equations on 
a Lie algebroid in the way given in \cite{Martinez}. In section \ref{inverse} we identify the insufficiency of the 
Helmholtz conditions as the lack of the Poincar\'{e} lemma and give a characterization of the variationality of a SODE on a 
regular Lie algebroid using Lemma \ref{lemma-sec} in section \ref{sections}. We also give a generalization to SODEs
 on regular Lie algebroids of Crampin's characterization for SODEs on tangent bundles \cite{81Crampin} weakening
 the notion of variationality and we include an example of a SODE on a Lie algebroid that is not variational but satisfies the Helmholtz conditions. In section~\ref{Sec:Morph} we study how morphisms of Lie algebroids treat the variational condition for SODE sections. This generalizes with an intrinsic proof results in~\cite{CM2008} about the inverse problem on a Lie group and the corresponding reduced inverse problem on the Lie algebra. An interesting application appears in section~\ref{Sec:Atiyah} where the inverse problem on Atiyah algebroids is considered. In appendix \ref{approaches} we give the equivalence between the Helmholtz conditions derived in this paper and the Helmholtz conditions given in \cite{CM2008} for Lie algebras and in \cite{POP} for Lie algebroids. Note that in this last paper the insufficiency of the Helmholtz conditions is not discussed.

\section{Lie algebroids} \label{background}
In this section we give the background on the theory of Lie algebroids that will be needed later on. This includes 
prolongations of Lie algebroids, the Tulczyjew isomorphism for Lie algebroids, symplectic Lie algebroids and Lagrangian 
submanifolds. For further details we refer the reader to \cite{LMM,MAC} and references therein.

\begin{definition}
A Lie algebroid is a vector bundle $\tau:E\longrightarrow M$ together with a morphism $\rho:E\longrightarrow TM$ of vector bundles (called the anchor) and a Lie bracket in $\Gamma (E)$, the $\mathcal{C}^{\infty}(M)$-module of sections of $E$, satisfying the Leibniz rule
$$
[X,fY]=\rho(X)(f)Y+f[X,Y] \quad \mbox{ for all } X,Y\in\Gamma(E), \mbox{ and } f\in C^{\infty}(M).
$$
\end{definition}

Note that this is in particular a generalization of tangent bundles and Lie algebras.

Let $(x^{i})$ denote local coordinates on $M$ and $\left\{ e_{1},\ldots, e_{n} \right\}$ be a basis of local sections of $E$. With respect to this basis, the structure functions $\rho^{i}_{\alpha}$ and $C^{\gamma}_{\alpha\beta}$ of the Lie algebroid are functions on $M$ defined by
\begin{equation} \label{structure}
\rho(e_{\alpha})=\rho^{i}_{\alpha}\frac{\partial}{\partial x^{i}} \quad \mbox{ and } \quad [e_{\alpha},e_{\beta}]=C^{\gamma}_{\alpha\beta}e_{\gamma}.
\end{equation}
Since the anchor $\rho$ is an algebra morphism, that is $[\rho(e_{\alpha}),\rho(e_{\beta})]=\rho[e_{\alpha},e_{\beta}]$, and the Jacobi identity $\sum_{(\alpha,\beta,\gamma)} [e_{\gamma},[e_{\alpha},e_{\beta}]]=0$ holds, the structure functions must satisfy the structure equations
$$
\rho^{j}_{\alpha}\frac{\partial\rho^{i}_{\beta}}{\partial x^{j}}-\rho^{j}_{\beta}\frac{\partial \rho^{i}_{\alpha}}{\partial x^{j}}=\rho^{i}_{\gamma}C^{\gamma}_{\alpha\beta} \quad \mbox{ and } \quad \sum_{(\alpha,\beta,\gamma)}\left[ \rho^{i}_{\alpha}\frac{\partial C^{\nu}_{\beta\gamma}}{\partial x^{i}}+C^{\nu}_{\alpha\mu}C^{\mu}_{\beta\gamma} \right]=0.
$$

A Lie algebroid structure in a vector bundle $\tau:E\longrightarrow M$ is equivalent to an exterior differential $d^{E}$ in the dual vector bundle $\tau^{*}:E^{*}\longrightarrow M$, that is, an operator
$
d^{E}:\Gamma(\wedge^{*}E^{*})\longrightarrow \Gamma(\wedge^{*+1}E^{*})
$
satisfying
\begin{eqnarray*}
&&d^{E}\circ d^{E}=0,\\
&&d^{E}(\alpha\wedge\beta)=d^{E}\alpha\wedge\beta+(-1)^{\deg(\alpha)}\alpha\wedge d^{E}\beta,
\end{eqnarray*}
where $\alpha,\beta\in\Gamma(\wedge^{*}E^{*})$ and ${\deg(\alpha)}$ denotes the degree of $\alpha$.

If $\alpha\in\Gamma(\wedge^{n}E^{*})$, the exterior differential $d^{E}\alpha$ is defined from the bracket and the anchor map by
\begin{eqnarray*}
d^{E}\alpha(e_{0},\ldots,e_{n})&=&\sum_{i=0}^{n}(-1)^{i}\rho(e_{i})\alpha(e_{0},\ldots,\widehat{e_{i}},\ldots,e_{n})\\
&& +\sum_{i<j}(-1)^{i+j}\alpha([e_{i},e_{j}],e_{0},\ldots,\widehat{e_{i}},\ldots,\widehat{e_{j}},\ldots,e_{n}),
\end{eqnarray*}
where $e_{0}\ldots,e_{n}\in\Gamma(E).$ On the other hand, given an exterior differential $d^{E}$, the equations
$$
\rho(e)(f)=\langle d^{E}f,e \rangle \mbox{ and } \langle \alpha,[e_{1},e_{2}] \rangle = \rho(e_{1})\langle \alpha, e_{2} \rangle - \rho(e_{2})\langle \alpha , e_{1} \rangle - d^{E}\alpha(e_{1},e_{2}),
$$
where $\alpha\in\Gamma(E^{*})$, $e,e_{1},e_{2}\in\Gamma(E)$ and $f\in C^{\infty}(M)$, define an anchor $\rho$ and a Lie bracket of sections $[\cdot,\cdot]$ for $E$.

If $\left\{e^{\alpha}\right\}$ denotes the dual basis to $\left\{e_{\alpha}\right\}$ then for $f\in\mathcal{C}^{\infty}(M)$ and $\theta=\theta_{\alpha}e^{\alpha}\in\Gamma(E^{*})$ the local expressions of the differentials are
$$
d^{E}f=\frac{\partial f}{\partial x^{i}}\rho^{i}_{\alpha}e^{\alpha} \quad \mbox{ and } \quad d^{E}\theta=\left(\frac{\partial\theta_{\gamma}}{\partial x^{i}}\rho^{i}_{\beta}-\frac{1}{2}\theta_{\alpha}C^{\alpha}_{\beta\gamma}\right)e^{\beta}\wedge e^{\gamma}.
$$

The following definitions will be used in order to introduce Lagrangian submanifolds on Lie algebroids.

\begin{definition}
A Lie algebroid morphism is a morphism of vector bundles $F:E\longrightarrow E'$ over $f:M\longrightarrow M'$ such that $d^{E}((F,f)^{*}\phi')=(F,f)^{*}(d^{E^{'}}\phi')$, for all $\phi'\in\Gamma(\bigwedge^{k}(E')^{*})$.
A Lie algebroid epimorphism is a Lie algebroid morphism $(F,f)$ such that $f$ is a surjective submersion and 
$F|_{E_x}\colon E_x\rightarrow E'_{f(x)}$ is a linear epimorphism for all $x\in M$.
\end{definition} 

\begin{definition}
A Lie subalgebroid is a morphism of Lie algebroids $j : F \longrightarrow E$, $i : N\longrightarrow M$ such that the pair $(j, i)$ is a monomorphism of vector bundles and $i$ is an injective immersion.
\end{definition}

\subsection{Prolongations of Lie algebroids}
Now we will introduce the prolongation of a Lie algebroid over a smooth map $f:M'\longrightarrow M$. This notion will allow a 
derivation of the Euler-Lagrange equations without using the Poisson bracket on the dual of the Lie algebroid. This is the analog 
of the Klein formalism for tangent bundles \cite{Klein}; it was given by Mart\'{i}nez in \cite{Martinez} for Lie algebroids and will be recalled in the next section.

In order to guarantee that the following construction is a vector bundle, a constant $c$ is needed such that
\begin{equation}
\mbox{dim}\left(\rho(E_{f(x')})+(T_{x'}f)(T_{x'}M') \right)=c \quad \mbox{ for all } x'\in M'. \label{dim}
\end{equation}
This condition implies that the dimension of the fibers must be constant.

\begin{definition}[\cite{LMM},\cite{Higgins}]
Let $(E,[\cdot,\cdot],\rho)$ be a Lie algebroid over a manifold $M$ with projection denoted by $\tau$, and $f:M'\longrightarrow M$ a smooth map satisfying (\ref{dim}). Then the prolongation of $E$ over $f$ is the Lie algebroid $(\mathcal{L}^{f}E,[\cdot,\cdot]^{f},\rho^{f})$ over $M'$ with total space
$$
\mathcal{L}^{f}E=\left\{ (b,v') \in E\times TM' : \rho(b)=(Tf)(v') \right\}
$$
and projection $\tau^{f}:\mathcal{L}^{f}E\longrightarrow M'$ given by $\tau^{f}(b,v')=\tau_{M'}(v')$. The sections of $\mathcal{L}^{f}E$ are of the form $(h^{i}(X_{i}\circ f),X')$, where $X'\in\mathfrak{X}(M')$, $X_{i}\in\Gamma(E)$ and $h^{i}\in\mathcal{C}^{\infty}(M')$. Then the Lie bracket is defined by
$$
[(h^{i}(X_{i}\circ f),X'),(s^{j}(Y_{j}\circ f),Y')]^{f}=(h^{i}s^{j}([X_{i},Y_{j}]\circ f)+X'(s^{j})(Y_{j}\circ f)-Y'(h^{i})(X_{i}\circ f) , [X',Y'])
$$
where $X',Y'\in\mathfrak{X}(M')$, $X_{i},Y_{i}\in\Gamma(E)$ and $h^{i},s^{i}\in\mathcal{C}^{\infty}(M')$. Note that the bracket in the second factor denotes the usual bracket of vector fields. Finally the anchor is given by the projection onto the second factor:
$$ 
\begin{array}{cccc}
\rho^{f}: & \mathcal{L}^{f}E & \longrightarrow & TM' \\
& (b,v') & \longmapsto & v'.
\end{array}
$$
\end{definition}

In particular if we take $f$ to be the projections $\tau:E\longrightarrow M$ and $\tau^{*}:E^{*}\longrightarrow M$ respectively then the prolongations $\mathcal{L}^{\tau}E$ and $\mathcal{L}^{\tau^{*}}E$ play the roles of $TTQ$ and $TT^{*}Q$ respectively, which are recovered when $E=TQ$. These are the prolongations that we will use in this paper, so we introduce now local coordinates. 

Let $\left\{ e_{\alpha} \right\}$ denote a basis of local sections of $\tau:E\longrightarrow M$ and $(x^{i},y^{\alpha})$ the corresponding coordinates on $E$. Having in mind the structure functions defined in (\ref{structure}), we consider the basis of local sections of $\mathcal{L}^{\tau}E\longrightarrow E$ given by
\begin{equation} \label{basis}
\tilde{T}_{\alpha}(a)=\left( e_{\alpha}(\tau(a)), \rho^{i}_{\alpha}\left.\frac{\partial}{\partial x^{i}}\right|_{a} \right), \quad \tilde{V}_{\alpha}(a)=\left( 0, \left.\frac{\partial}{\partial y^{\alpha}}\right|_{a} \right), \quad a\in E
\end{equation}
following the notation in \cite{LMM}.

With respect to this basis the structure functions are given by
$$
[\tilde{T}_{\alpha},\tilde{T}_{\beta}]^{\tau}=C^{\gamma}_{\alpha\beta}\tilde{T}_{\gamma}, \quad [\tilde{T}_{\alpha},\tilde{V}_{\beta}]^{\tau}=0, \quad [\tilde{V}_{\alpha},\tilde{V}_{\beta}]^{\tau}=0,
$$
$$
\rho^{\tau}(\tilde{T}_{\alpha})=\rho^{i}_{\alpha}\frac{\partial}{\partial x^{i}}, \quad \rho^{\tau}(\tilde{V}_{\alpha})=\frac{\partial}{\partial y^{\alpha}},
$$
and the local coordinates induced on $\mathcal{L}^{\tau}E$ will be denoted by $(x^{i},y^{\alpha},z^{\alpha},v^{\alpha})$.

Let $\left\{ e^{\alpha} \right\}$ be the dual basis to $\left\{ e_{\alpha} \right\}$ and $(x^{i},y_{\alpha})$ the corresponding coordinates on $E^{*}$. We consider the basis of local sections of $\mathcal{L}^{\tau^{*}}E\longrightarrow E^{*}$
$$
\tilde{T}_{\alpha}(a^{*})=\left( e_{\alpha}(\tau^{*}(a^{*})), \rho^{i}_{\alpha}\left.\frac{\partial}{\partial x^{i}}\right|_{a^{*}} \right), \quad \tilde{V}^{\alpha}(a^{*})=\left( 0, \left.\frac{\partial}{\partial y_{\alpha}}\right|_{a^{*}} \right), \quad a^{*}\in E^{*}
$$
with structure functions given by
$$
[\tilde{T}_{\alpha},\tilde{T}_{\beta}]^{\tau^{*}}=C^{\gamma}_{\alpha\beta}\tilde{T}_{\gamma}, \quad [\tilde{T}_{\alpha},\tilde{V}^{\beta}]^{\tau^{*}}=0, \quad [\tilde{V}^{\alpha},\tilde{V}^{\beta}]^{\tau^{*}}=0,
$$
$$
\rho^{\tau^{*}}(\tilde{T}_{\alpha})=\rho^{i}_{\alpha}\frac{\partial}{\partial x^{i}}, \quad \rho^{\tau^{*}}(\tilde{V}_{\alpha})=\frac{\partial}{\partial y_{\alpha}}.
$$
The local coordinates induced on $\mathcal{L}^{\tau^{*}}E\longrightarrow E^{*}$ will be denoted by $(x^{i},y_{\alpha},z^{\alpha},v_{\alpha})$.

\begin{remark}
A map $F:E\longrightarrow E^{*}$ over $M$ induces a map $\mathcal{L}F:\mathcal{L}^{\tau}E\longrightarrow \mathcal{L}^{\tau^{*}}E$ defined by
$$
\mathcal{L}F(b,X_{a}):=(b,T_{a}F(X_{a})).
$$
If locally $F(x^{i},y^{\alpha})=(x^{i},F_{\alpha}(x,y))$, then the local expression for $\mathcal{L}F$ is
$$
\mathcal{L}F(x^{i},y^{\alpha},z^{\alpha},v^{\alpha})=\left(x^{i},F_{\alpha},z^{\alpha},\rho^{i}_{\beta}z^{\beta}\frac{\partial F_{\alpha}}{\partial x^{i}}+v^{\beta}\frac{\partial F_{\alpha}}{\partial y^{\beta}} \right).
$$
\end{remark}

\subsection{Lagrangian submanifolds of symplectic Lie algebroids}

According to the philosophy in~\cite{BFM}, we must define Lagrangian submanifolds of symplectic Lie algebroids, see~\cite{LMM} 
for more details. 

\begin{definition}
A symplectic section $\Omega$ on a Lie algebroid $(E,[\cdot, \cdot],\rho)$ is a closed section of the vector bundle 
$E^{*}\wedge E^{*}\longrightarrow M$ satisfying that $\Omega_{x}:E_{x}\wedge E_{x}\longrightarrow \mathbb{R}$ is
 non-degenerate, that is, each fiber is a symplectic vector space. A Lie algebroid with a symplectic section will be 
called a symplectic Lie algebroid. 
\end{definition}

\begin{ex}
The Lie algebroid $\mathcal{L}^{\tau^{*}}E$ has a canonical symplectic section defined as $\Omega_{E}=-d^{\mathcal{L}^{\tau^{*}}E}\lambda_{E}$, where $\lambda_{E}$ is the canonical section of $(\mathcal{L}^{\tau^{*}}E)^{*}\longrightarrow E^{*}$ given by
$$
\lambda_{E}(a^{*})(b,v)=a^{*}(b) \quad \mbox{ for all } a^{*}\in E^{*}
$$
and is called the Liouville section.
\end{ex}

Once a symplectic section has been defined, we can introduce Lagrangian submanifolds of the Lie algebroid. 
As mentioned before, we are interested in Lagrangian submanifolds to extend the geometric setting of the inverse problem in~\cite{BFM}
to Lie algebroids. 

\begin{definition}
Let $\Omega$ be a symplectic section on $E$. The Lie subalgebroid $j:F\longrightarrow E$, $i:N\longrightarrow M$ is 
called Lagrangian if $j(F_{x})$ is a Lagrangian subspace of $(E_{i(x)},\Omega_{i(x)})$ for each $x\in N$.
\end{definition}

The Tulczyjew isomorphism in classical mechanics can be extended to the Lie algebroid setting. In this context the canonical
isomorphism is between $\rho^{*}(TE^{*})$ and $(\mathcal{L}^{\tau}E)^{*}$
$$
\xymatrix{
& \mathcal{L}^{\tau^{*}}E \equiv \rho^{*}(TE^{*}) \ar[dl] \ar[dr] \ar[rr]^{A_{E}} & & (\mathcal{L}^{\tau}E)^{*} \ar[dl] \\
E^{*} & & E &
}
$$
and is locally given by 
$A_{E}(x^{i},y_{\alpha},z^{\alpha},v_{\alpha})=(x^{i},z^{\alpha},v_{\alpha}+C^{\gamma}_{\alpha\beta}y_{\gamma}z^{\beta},y_{\alpha})$.
 For an intrinsic definition and more details we refer the reader to \cite{LMM}.

\begin{remark}
The vector bundles $\mathcal{L}^{\tau^{*}}E\longrightarrow E^{*}$ and $\rho^{*}(TE^{*})\longrightarrow E$ have the same total 
spaces but different projections.
\end{remark}

Now we will recall Proposition 7.8 in \cite{LMM} which will be used in the sequel. Let $\tau^{N}$ denote the projection $\tau:E\longrightarrow M$ restricted to a submanifold $i:N\hookrightarrow E$, that is, 
$\tau^{N}=\tau\circ i$. 

\begin{prop}
Given a section $\tilde{X}$ of the pull-back vector bundle $\rho^{*}(TE^{*})\longrightarrow E$ define $\alpha_{\tilde{X}}=A_{E}\circ\tilde{X}$, which is a section of $(\mathcal{L}^{\tau}E)^{*}\longrightarrow E$, and put $N=\tilde{X}(E)$. Then the Lie subalgebroid $(Id,Ti):\mathcal{L}^{(\tau^{\tau^{*}})^{N}}(\mathcal{L}^{\tau^{*}}E)\longrightarrow \mathcal{L}^{\tau^{\tau^{*}}}(\mathcal{L}^{\tau^{*}}E)$, $i:N \longrightarrow \mathcal{L}^{\tau^{*}}E$ is Lagrangian if and only if $d^{\mathcal{L}^{\tau}E}\alpha_{\tilde{X}}=0$, where $\mathcal{L}^{(\tau^{\tau^{*}})^{N}}(\mathcal{L}^{\tau^{*}}E)$ is the prolongation of $\mathcal{L}^{\tau^{*}}E$ over the map $(\tau^{\tau^{*}})^{N}:N\longrightarrow E^{*}$.
\end{prop}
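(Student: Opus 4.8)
The plan is to reduce the statement, through the Tulczyjew isomorphism, to the elementary fact that the image of a $d$-closed section of the dual of a Lie algebroid is a Lagrangian submanifold. Since $\alpha_{\tilde{X}}=A_{E}\circ\tilde{X}$, the diffeomorphism $A_{E}\colon\mathcal{L}^{\tau^{*}}E\to(\mathcal{L}^{\tau}E)^{*}$ sends $N=\tilde{X}(E)$ onto $N':=\alpha_{\tilde{X}}(E)$, the image of the section $\alpha_{\tilde{X}}$ of the vector bundle $(\mathcal{L}^{\tau}E)^{*}\to E$. First I would recall from~\cite{LMM} the relevant part of the Lie algebroid Tulczyjew triple: $(\mathcal{L}^{\tau}E)^{*}\to E$ is the dual of the Lie algebroid $\mathcal{L}^{\tau}E$, so the construction of the Example above, applied to $\mathcal{L}^{\tau}E$ in place of $E$, equips the prolongation $\mathcal{L}^{\pi}(\mathcal{L}^{\tau}E)$ of $\mathcal{L}^{\tau}E$ over the projection $\pi\colon(\mathcal{L}^{\tau}E)^{*}\to E$ with a canonical symplectic section $\Omega_{\mathcal{L}^{\tau}E}=-d^{\mathcal{L}^{\pi}(\mathcal{L}^{\tau}E)}\lambda_{\mathcal{L}^{\tau}E}$; and $A_{E}$ lifts to a Lie algebroid isomorphism $\mathcal{L}A_{E}$ from $\big(\mathcal{L}^{\tau^{\tau^{*}}}(\mathcal{L}^{\tau^{*}}E),\Omega\big)$ onto $\big(\mathcal{L}^{\pi}(\mathcal{L}^{\tau}E),\Omega_{\mathcal{L}^{\tau}E}\big)$, where $\Omega$ denotes the canonical symplectic section of the ambient algebroid, so that $(\mathcal{L}A_{E})^{*}\Omega_{\mathcal{L}^{\tau}E}=\Omega$ and $\mathcal{L}A_{E}$ carries the Lie subalgebroid over $N$ onto the analogous one over $N'$. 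Granting this, what remains is: the Lie subalgebroid of $\mathcal{L}^{\pi}(\mathcal{L}^{\tau}E)$ determined by $N'$ is Lagrangian if and only if $d^{\mathcal{L}^{\tau}E}\alpha_{\tilde{X}}=0$.

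I would then prove this in the following general form: for a Lie algebroid $A\to Q$ with projection $\pi_{A}\colon A^{*}\to Q$, a section $\beta\in\Gamma(A^{*})$, and the canonical symplectic prolongation $\big(\mathcal{L}^{\pi_{A}}A,\,\Omega_{A}=-d^{\mathcal{L}^{\pi_{A}}A}\lambda_{A}\big)$ of $A^{*}$, the Lie subalgebroid $(Id,Ti)\colon\mathcal{L}^{\pi_{A}\circ i}A\to\mathcal{L}^{\pi_{A}}A$ attached to $i\colon\beta(Q)\hookrightarrow A^{*}$ is Lagrangian if and only if $d^{A}\beta=0$. Because $\pi_{A}\circ\beta=Id_{Q}$, functoriality of the prolongation (as in the Remark above, cf.~\cite{LMM,Higgins}) produces a Lie algebroid monomorphism $\mathcal{L}\beta\colon A\cong\mathcal{L}^{Id_{Q}}A\longrightarrow\mathcal{L}^{\pi_{A}}A$ over $\beta$, $b\longmapsto\big(b,T\beta(\rho_{A}(b))\big)$, whose image is exactly that Lie subalgebroid and whose rank, $\mathrm{rk}\,A$, is half that of $\mathcal{L}^{\pi_{A}}A$. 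Therefore being Lagrangian is equivalent to being isotropic, i.e.\ to $(\mathcal{L}\beta)^{*}\Omega_{A}=0$; and, $\mathcal{L}\beta$ being a Lie algebroid morphism, $(\mathcal{L}\beta)^{*}\Omega_{A}=-d^{A}\big((\mathcal{L}\beta)^{*}\lambda_{A}\big)$. The tautological definition $\lambda_{A}(\xi)(b,v)=\langle\xi,b\rangle$ gives $(\mathcal{L}\beta)^{*}\lambda_{A}=\beta$ at once, so the subalgebroid is isotropic precisely when $d^{A}\beta=0$. Applying this with $A=\mathcal{L}^{\tau}E$ and $\beta=\alpha_{\tilde{X}}$ completes the argument.

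The only non-formal ingredient is the fact invoked in the first step, that $A_{E}$ lifts to a symplectomorphism of the two prolongations; this belongs to the machinery of~\cite{LMM}, and the efficient way to check it is to combine the local formula $A_{E}(x^{i},y_{\alpha},z^{\alpha},v_{\alpha})=(x^{i},z^{\alpha},v_{\alpha}+C^{\gamma}_{\alpha\beta}y_{\gamma}z^{\beta},y_{\alpha})$ with the structure functions of $\mathcal{L}^{\tau^{\tau^{*}}}(\mathcal{L}^{\tau^{*}}E)$ and $\mathcal{L}^{\pi}(\mathcal{L}^{\tau}E)$, so I expect the bulk of the effort to lie there. If one prefers a proof that never leaves $\mathcal{L}^{\tau^{*}}E$, one can instead argue purely in coordinates: writing $\tilde{X}(x^{i},y^{\alpha})=(x^{i},\mu_{\alpha}(x,y),y^{\alpha},f_{\alpha}(x,y))$, one obtains $\alpha_{\tilde{X}}=(f_{\alpha}+C^{\gamma}_{\alpha\beta}\mu_{\gamma}y^{\beta})\tilde{T}^{\alpha}+\mu_{\alpha}\tilde{V}^{\alpha}$, so the local formula for $d^{E}$ on $1$-forms recalled above, applied to $\mathcal{L}^{\tau}E$, turns $d^{\mathcal{L}^{\tau}E}\alpha_{\tilde{X}}=0$ into an explicit system of equations on $\mu_{\alpha},f_{\alpha}$; one then writes down a local frame of $\mathcal{L}^{(\tau^{\tau^{*}})^{N}}(\mathcal{L}^{\tau^{*}}E)$, its image under $(Id,Ti)$, and the canonical (complete lift) symplectic section of $\mathcal{L}^{\tau^{\tau^{*}}}(\mathcal{L}^{\tau^{*}}E)$ in the induced coordinates, and verifies that the restriction of the latter to this half-dimensional subalgebroid vanishes exactly on that system. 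In that route the double prolongation and the lifted symplectic section are the main computational hurdle.
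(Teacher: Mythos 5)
There is nothing in the paper to compare against: the proposition is quoted verbatim from \cite{LMM} (Proposition 7.8 there) and used as a black box, so I can only judge your argument on its own terms. Its core is correct and complete: for a section $\beta\in\Gamma(A^{*})$ of the dual of a Lie algebroid $A\to Q$, the map $\mathcal{L}\beta\colon A\to\mathcal{L}^{\pi_{A}}A$, $b\mapsto\bigl(b,T\beta(\rho_{A}(b))\bigr)$, is indeed a Lie algebroid morphism whose image coincides with the subalgebroid $(Id,Ti)$ attached to $\beta(Q)$ (the tangent component of any element of that subalgebroid is forced to be $T\beta(\rho_{A}(b))$ because $\pi_{A}\circ\beta=Id_{Q}$), the identity $(\mathcal{L}\beta)^{*}\lambda_{A}=\beta$ gives $(\mathcal{L}\beta)^{*}\Omega_{A}=-d^{A}\beta$, and half-rank isotropic equals Lagrangian; applied to $A=\mathcal{L}^{\tau}E$, $\beta=\alpha_{\tilde X}$ this is the heart of the matter, and your coordinate expression $\alpha_{\tilde X}=(f_{\alpha}+C^{\gamma}_{\alpha\beta}\mu_{\gamma}y^{\beta})\tilde{T}^{\alpha}+\mu_{\alpha}\tilde{V}^{\alpha}$ is also right.

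The genuine gap is in the reduction step, and you have only partially acknowledged it. The statement is meaningful only after one fixes the symplectic section on the ambient algebroid $\mathcal{L}^{\tau^{\tau^{*}}}(\mathcal{L}^{\tau^{*}}E)$; in \cite{LMM} it is the complete lift $\Omega_{E}^{c}$ of $\Omega_{E}$ (the analogue of the tangent lift $d_{T}\omega_{Q}$ on $TT^{*}Q$), which you name only in your coordinate variant. With that choice your first step requires two unproved facts: (i) the lift $\mathcal{L}A_{E}$ must actually be constructed — since $\mathcal{L}^{\tau^{*}}E\to E^{*}$ and $(\mathcal{L}^{\tau}E)^{*}\to E$ are bundles over different bases, it is not the prolongation of a morphism over the identity as in the paper's Remark; the natural formula combines the $E$-component of $z$ with $T\mathrm{pr}(w)$ and $TA_{E}(w)$, where $\mathrm{pr}\colon\rho^{*}(TE^{*})\to E$, and uses $\pi\circ A_{E}=\mathrm{pr}$, the same identity needed to see that $A_{E}(N)=\alpha_{\tilde X}(E)$ and that the two subalgebroids correspond; and (ii) $(\mathcal{L}A_{E})^{*}\Omega_{\mathcal{L}^{\tau}E}=\Omega_{E}^{c}$. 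Point (ii) is precisely the nontrivial content of the Lie algebroid Tulczyjew triple; you flag it and defer it to \cite{LMM} or a coordinate check, which is acceptable as an outline but is the one real hole if the proof is to be self-contained. (If instead you simply define the ambient symplectic section as $(\mathcal{L}A_{E})^{*}\Omega_{\mathcal{L}^{\tau}E}$, the reduction becomes vacuous and your lemma does all the work, but then you must still argue that this section is the complete lift with respect to which the proposition, and its use elsewhere, is stated.)
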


\begin{remark}
According to Definition 8.1 in \cite{LMM}, $N=\tilde{X}(E)$ is a Lagrangian submanifold of $\mathcal{L}^{\tau^{*}}E$.
\end{remark}

\section{Closed sections versus exact sections} \label{sections}

On Lie algebroids the Poincar\'{e} lemma does not hold in general for the differential $d^{E}$, that is, the closedness of a section does not guarantee its local exactness.

\begin{ex}
Consider the Example 3.3.6 in \cite{MAC}, that is, the Lie algebroid with total space $E=T\mathbb{R}$, base space $M=\mathbb{R}$, Lie bracket defined by
$$
\left[\xi\frac{d}{dt},\eta\frac{d}{dt}\right]'=t\left(\frac{d\eta}{dt}\xi-\frac{d\xi}{dt}\eta\right)\frac{d}{dt}
$$
for functions $\xi,\eta:\mathbb{R}\longrightarrow\mathbb{R}$, where $t$ denotes the coordinate on $\mathbb{R}$, and anchor given by
$$
\begin{array}{lccc}
\rho: & T\mathbb{R} & \longrightarrow & T\mathbb{R} \\
& \xi\frac{d}{dt} & \longmapsto & t\xi\frac{d}{dt}\,.
\end{array}
$$
Thus, the structure functions are $\rho^{1}_{1}=t$ and $C^{1}_{11}=0$. Note that this algebroid is not regular since $\rho(E_{0})=0$ 
while rank$(\rho(E_{t}))=1$ for $t\not=0$, where $E_{t}$ denotes the fiber of $E$ over $t$ in $M$.

We want to detect a section of $T^{*}\mathbb{R}\longrightarrow\mathbb{R}$ which is closed but not locally exact. Note first that, 
by dimension, $d^{T\mathbb{R}}\theta=0$ for all $\theta=\alpha(t)dt\in\Gamma(T^{*}\mathbb{R})$. Since 
$d^{T\mathbb{R}}f=\frac{df}{dt}tdt$ for $f:\mathbb{R}\longrightarrow\mathbb{R}$, it suffices to take $\alpha(t)$ equal to
a nonzero constant $c$ so that the equation $\alpha(t)=t\frac{df}{dt}$ is not satisfied around $0$.
\end{ex}

We will give a characterization of the local exactness of a section of the dual of a regular Lie algebroid. For that we use some suitable coordinates given by the local splitting theorem in \cite{RUI}. If $E$ is regular, that is, $\rho$ has constant rank $q$, then the theorem reduces to the following one:

\begin{thm}[\cite{RUI}] \label{splitting}
Let $(E,[\cdot,\cdot],\rho)$ be a regular Lie algebroid over $M$ and let $x_{0}\in M$. There exist coordinates $(x^{i})$, $i=1,\ldots,m=\dim(M)$ in a neighborhood $U$ of $x_{0}$ and a basis of sections $\left\{e_{1},\ldots,e_{n}\right\}$ of $\tau^{-1}(U)\longrightarrow U$ such that
\begin{eqnarray*}
&&\rho(e_{i})=\frac{\partial}{\partial x^{i}}, \quad i=1,\ldots,q, \\
&&\rho(e_{s})=0, \quad s=q+1,\ldots,n.
\end{eqnarray*}
Moreover $C^{\alpha}_{\beta\gamma}=0$ for all $\alpha\leq q$.
\end{thm}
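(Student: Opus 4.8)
The plan is to deduce the statement from the classical Frobenius theorem applied to the image distribution $\rho(E)\subseteq TM$, combined with a local splitting of the anchor as a surjective bundle map. First, I would record the two consequences of regularity: since $\rho$ has constant rank $q$, the image $\rho(E)$ is a rank-$q$ subbundle of $TM$ and $\ker\rho$ is a rank-$(n-q)$ subbundle of $E$. Because the anchor satisfies $[\rho(X),\rho(Y)]=\rho([X,Y])$ for all $X,Y\in\Gamma(E)$, the distribution $\rho(E)$ is involutive, so by the Frobenius theorem there are coordinates $(x^{1},\ldots,x^{m})$ on a neighborhood $U$ of $x_{0}$ for which $\rho(E)$ is spanned by $\partial/\partial x^{1},\ldots,\partial/\partial x^{q}$.

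Next, I would build the frame. The restriction $\rho\colon\tau^{-1}(U)\longrightarrow\rho(E)|_{U}$ is a surjective vector bundle morphism of constant rank, hence, shrinking $U$ if necessary, it admits a right inverse; this lets me choose local sections $e_{1},\ldots,e_{q}$ of $\tau^{-1}(U)\longrightarrow U$ with $\rho(e_{i})=\partial/\partial x^{i}$ for $i=1,\ldots,q$. Completing with any local frame $e_{q+1},\ldots,e_{n}$ of the subbundle $\ker\rho$ over $U$ gives $\rho(e_{s})=0$ for $s=q+1,\ldots,n$ by construction, and $\{e_{1},\ldots,e_{n}\}$ is a frame of $E$ over $U$: a relation $\sum_{\alpha}a^{\alpha}e_{\alpha}(x)=0$ forces $a^{1}=\cdots=a^{q}=0$ upon applying $\rho_{x}$, since $\partial/\partial x^{1}|_{x},\ldots,\partial/\partial x^{q}|_{x}$ are independent, and then $a^{q+1}=\cdots=a^{n}=0$ because the remaining sections frame $\ker\rho$.

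Finally, I would verify $C^{\alpha}_{\beta\gamma}=0$ for $\alpha\leq q$ by applying the anchor to $[e_{\beta},e_{\gamma}]=C^{\alpha}_{\beta\gamma}e_{\alpha}$: intertwining of brackets gives $[\rho(e_{\beta}),\rho(e_{\gamma})]=\sum_{k\leq q}C^{k}_{\beta\gamma}\,\partial/\partial x^{k}$, and the left-hand side vanishes in all cases --- coordinate vector fields commute when $\beta,\gamma\leq q$, and one of $\rho(e_{\beta}),\rho(e_{\gamma})$ is zero otherwise --- so independence of $\partial/\partial x^{1},\ldots,\partial/\partial x^{q}$ forces $C^{k}_{\beta\gamma}=0$ for all $k\leq q$ and all $\beta,\gamma$. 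The only step carrying real content is the appeal to Frobenius in the first paragraph; the rest is local linear algebra, and in fact the statement is precisely the regular specialization of the local splitting theorem of~\cite{RUI}, which could be invoked directly. The point requiring care throughout is that constant rank of $\rho$ is exactly what makes $\rho(E)$ and $\ker\rho$ genuine smooth subbundles, so that both Frobenius and the bundle splitting apply.
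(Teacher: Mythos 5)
Your argument is correct. Note, however, that the paper does not prove this statement at all: it is quoted as the regular specialization of the local splitting theorem of~\cite{RUI}, so there is no in-paper proof to match yours against. Your self-contained route --- constant rank makes $\rho(E)$ and $\ker\rho$ genuine subbundles, the anchor's bracket compatibility makes $\rho(E)$ involutive (using that local sections of $\rho(E)$ lift through $\rho$ via a local complement to $\ker\rho$), Frobenius supplies the coordinates, a local right inverse of $\rho$ supplies $e_{1},\ldots,e_{q}$ with $\rho(e_{i})=\partial/\partial x^{i}$, a frame of $\ker\rho$ completes the basis, and applying $\rho$ to $[e_{\beta},e_{\gamma}]=C^{\alpha}_{\beta\gamma}e_{\alpha}$ kills $C^{k}_{\beta\gamma}$ for $k\leq q$ --- is exactly the standard proof of the regular case and each step is sound. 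The only thing the citation to~\cite{RUI} buys beyond your argument is generality: the splitting theorem there covers non-regular algebroids, where the rank of $\rho$ jumps and neither $\rho(E)$ nor $\ker\rho$ is a subbundle, and the proof there is correspondingly more delicate (a normal form built from flows of anchored sections rather than a single application of Frobenius). For the regular statement actually used in the paper, your proof is complete; you could tighten it only by saying explicitly that involutivity is checked on a frame of $\rho(E)$ obtained as $\rho$-images of local sections of a complement to $\ker\rho$, which is the same splitting you invoke later for the right inverse.
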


The characterization reads as follows:

\begin{lemma} \label{lemma-sec}
Let $(E,[\cdot,\cdot],\rho)$ be a regular Lie algebroid over $M$. A section $\theta$ of $\tau^{*}:E^{*}\longrightarrow M$ is locally exact if and only if it is closed and it satisfies $\theta(Z)=0$ for all $Z\in\Gamma(\mbox{Ker}(\rho))$.
\end{lemma}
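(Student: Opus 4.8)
The plan is to establish both implications separately, with the forward direction being essentially trivial and the converse relying on the local splitting theorem (Theorem~\ref{splitting}) to reduce to a Poincaré-lemma computation.

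For the forward implication, suppose $\theta = d^{E}f$ locally for some $f \in C^{\infty}(M)$. Closedness is immediate from $d^{E}\circ d^{E}=0$. For the kernel condition, recall that $\langle d^{E}f, e\rangle = \rho(e)(f)$ for any section $e$; hence if $Z \in \Gamma(\mathrm{Ker}(\rho))$ then $\theta(Z) = d^{E}f(Z) = \rho(Z)(f) = 0$. This half requires no regularity hypothesis.

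For the converse, assume $\theta$ is closed and annihilates $\mathrm{Ker}(\rho)$. Fix $x_{0}\in M$ and choose coordinates $(x^{i})$ on a neighborhood $U$ together with a basis $\{e_{1},\dots,e_{n}\}$ as in Theorem~\ref{splitting}, so that $\rho(e_{i}) = \partial/\partial x^{i}$ for $i\le q$ and $\rho(e_{s})=0$ for $s>q$, with $C^{\alpha}_{\beta\gamma}=0$ whenever $\alpha\le q$. Write $\theta = \theta_{\alpha}e^{\alpha}$ in the dual basis. The hypothesis $\theta(Z)=0$ for all $Z\in\Gamma(\mathrm{Ker}(\rho))$ forces $\theta_{s}=0$ for $s=q+1,\dots,n$, since $e_{q+1},\dots,e_{n}$ span $\mathrm{Ker}(\rho)$ in this frame. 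Thus $\theta = \sum_{i\le q}\theta_{i}e^{i}$. Now I would feed this into the local formula for $d^{E}\theta$, namely $d^{E}\theta = \bigl(\rho^{j}_{\beta}\partial\theta_{\gamma}/\partial x^{j} - \tfrac12\theta_{\alpha}C^{\alpha}_{\beta\gamma}\bigr)e^{\beta}\wedge e^{\gamma}$. Because $C^{\alpha}_{\beta\gamma}=0$ for $\alpha\le q$ and $\theta_{\alpha}=0$ for $\alpha>q$, the structure-constant term drops out entirely, and because $\rho(e_{i})=\partial/\partial x^{i}$ for $i\le q$ while $\rho(e_{s})=0$ for $s>q$, the anchor coefficients collapse to Kronecker deltas in the relevant range. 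The condition $d^{E}\theta=0$ therefore reduces to $\partial\theta_{i}/\partial x^{j} = \partial\theta_{j}/\partial x^{i}$ for all $i,j\le q$, together with $\partial\theta_{i}/\partial x^{s}=0$ for $i\le q$ and $s>q$ — that is, the $\theta_{i}$ depend only on $(x^{1},\dots,x^{q})$ and form a closed $1$-form in those variables in the ordinary sense.

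At this point the classical Poincaré lemma applies: shrinking $U$ to a ball, there is $f(x^{1},\dots,x^{q})\in C^{\infty}(U)$ with $\partial f/\partial x^{i} = \theta_{i}$ for $i\le q$. Finally I verify $d^{E}f = \theta$ using $d^{E}f = (\partial f/\partial x^{j})\rho^{j}_{\alpha}e^{\alpha}$: for $\alpha=i\le q$ this gives $\partial f/\partial x^{i}=\theta_{i}$, and for $\alpha=s>q$ it gives $0=\theta_{s}$, both of which hold. The main obstacle is purely bookkeeping — making sure the interplay between the three vanishing facts ($\theta_{s}=0$, $C^{\alpha}_{\beta\gamma}=0$ for $\alpha\le q$, $\rho(e_{s})=0$) is handled cleanly so that $d^{E}\theta=0$ genuinely decouples into the ordinary exterior-derivative condition on $(\theta_{1},\dots,\theta_{q})$ as functions of $(x^{1},\dots,x^{q})$ alone; once that reduction is transparent, the rest is the standard lemma.
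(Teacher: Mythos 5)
Your overall route is the same as the paper's: forward direction via $\langle d^{E}f,Z\rangle=\rho(Z)(f)$, and converse via the splitting theorem, using $\theta_{s}=0$ for $s>q$ and $C^{\alpha}_{\beta\gamma}=0$ for $\alpha\le q$ to reduce $d^{E}\theta=0$ to the symmetry $\partial\theta_{i}/\partial x^{j}=\partial\theta_{j}/\partial x^{i}$ for $i,j\le q$. However, one step is wrong: you claim that closedness also yields $\partial\theta_{i}/\partial x^{s}=0$ for $i\le q$, $s>q$, so that the $\theta_{i}$ depend only on $(x^{1},\dots,x^{q})$. This does not follow. In $d^{E}\theta(e_{\beta},e_{\gamma})=\rho(e_{\beta})\theta_{\gamma}-\rho(e_{\gamma})\theta_{\beta}-\theta_{\alpha}C^{\alpha}_{\beta\gamma}$ the only derivatives that ever appear are along $\rho(e_{\beta})$, i.e.\ $\partial/\partial x^{j}$ with $j\le q$; derivatives in the transverse directions $x^{s}$ never occur, and for $\beta\le q<\gamma$ every term vanishes identically ($\theta_{\gamma}=0$, $\rho(e_{\gamma})=0$, and $\theta_{\alpha}C^{\alpha}_{\beta\gamma}=0$ because $C^{\alpha}_{\beta\gamma}=0$ for $\alpha\le q$ while $\theta_{\alpha}=0$ for $\alpha>q$), so no condition on $\partial\theta_{i}/\partial x^{s}$ is produced. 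A concrete counterexample to your intermediate claim: $E=M\times\mathbb{R}^{2}$ over $M=\mathbb{R}^{2}$ with $\rho(e_{1})=\partial/\partial x^{1}$, $\rho(e_{2})=0$, $[e_{1},e_{2}]=0$; then $\theta=x^{2}e^{1}$ is closed and annihilates $\mathrm{Ker}(\rho)$ but $\partial\theta_{1}/\partial x^{2}\neq0$ (it is nonetheless exact, with $f=x^{1}x^{2}$).

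The gap is easily repaired and the lemma stands: treat $x^{q+1},\dots,x^{m}$ as parameters. The relations $\partial\theta_{i}/\partial x^{j}=\partial\theta_{j}/\partial x^{i}$, $i,j\le q$, hold for each fixed value of the parameters, so the Poincar\'e lemma with parameters (for instance the homotopy formula $f(x)=\int_{0}^{1}\theta_{i}(tx^{1},\dots,tx^{q},x^{q+1},\dots,x^{m})\,x^{i}\,dt$ on a suitable star-shaped chart) produces a smooth function $f$ of all the coordinates with $\partial f/\partial x^{i}=\theta_{i}$ for $i\le q$, after which your final verification $d^{E}f=\theta$ goes through unchanged. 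This is in effect what the paper's proof does: it only asserts the integrability condition in the first $q$ indices and invokes the corresponding local solvability, without the spurious claim that the $\theta_{i}$ are independent of the transverse coordinates.
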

\begin{proof}
$\Rightarrow$ Let $\left\{ e^{1},\ldots, e^{n} \right\}$ denote a local basis of $\tau^{*}:E^{*}\longrightarrow M$ and write $\theta=\theta_{\gamma}(x)e^{\gamma}$. If $\theta=d^{E}f$ locally,  then $d^{E}\theta=0$. The second condition also holds since $\theta_{\gamma}=\frac{\partial f}{\partial x^{i}}\rho^{i}_{\gamma}$ and then for each $X=X^{\gamma}e_{\gamma}\in\Gamma(\mbox{Ker}(\rho))$ we have
$$\theta(X)=\theta_{\gamma}X^{\gamma}=\frac{\partial f}{\partial x^{i}}\underbrace{\rho^{i}_{\gamma}X^{\gamma}}_{=0}=0.$$

$\Leftarrow$ To prove the converse result take the coordinates $(x^{i})$ on $M$ and the basis $\left\{e_{1},\ldots, e_{n}\right\}$ 
of sections of $E\longrightarrow M$ given in the splitting Theorem \ref{splitting}, so that $\left\{e_{q+1},\ldots, e_{n}\right\}$ 
is a basis of $\Gamma(\mbox{Ker}(\rho))$. Let $\left\{e^{1},\ldots,e^{n}\right\}$ denote the dual basis. If 
$\theta$ annihilates the sections $\Gamma(\mbox{Ker}(\rho))$, then it is written as $\theta=\theta_{\gamma}(x^{i})e^{\gamma}$ for $\gamma=1,\ldots,q$.

Locally, the condition $d^{E}\theta=0$ reads
$$
\frac{\partial \theta_{\gamma}}{\partial x^{i}}\rho^{i}_{\beta}-\frac{\partial \theta_{\beta}}{\partial x^{i}}\rho^{i}_{\gamma}-\theta_{\alpha}C^{\alpha}_{\beta\gamma}=0 \quad \mbox{ for all } \beta,\gamma=1,\ldots n,\quad i=1\ldots m.
$$
Using that $\rho^{i}_{\beta}=0$ for $\beta>q$, $\rho^{i}_{\beta}=\delta^{i}_{\beta}$ for $\beta\leq q$ and $C^{\alpha}_{\beta\gamma}=0$ for $\alpha\leq q$ in the chosen coordinates and also that $\theta_{\gamma}=0$ for $\gamma>q$ the above condition reduces to
$$
\frac{\partial \theta_{\gamma}}{\partial x^{\beta}}-\frac{\partial \theta_{\beta}}{\partial x^{\gamma}}=0, \quad \beta,\gamma=1,\ldots,q,
$$ 
which is precisely the integrability condition that provides locally a function $f(x)$ such that $\theta_{\gamma}=\frac{\partial f}{\partial x^{\gamma}}$, $\gamma=1\ldots q$.
\end{proof}

Next we give an example of a regular Lie algebroid for which the Poincar\'{e} lemma is not satisfied:
\begin{ex} \label{se2}
Consider the Lie algebra $E=\mathfrak{se}(2)$ with generators 
$$
e_{1} = \left(\begin{array}{ccc} 0&0&1\\ 0&0&0\\ 0&0&0 \end{array}\right),
 \quad e_{2} = \left(\begin{array}{ccc} 0&0&0\\ 0&0&1\\ 0&0&0 \end{array}\right),
 \quad e_{3} = \left(\begin{array}{ccc} 0&-1&0\\ 1&0&0\\ 0&0&0 \end{array}\right),
$$
Lie bracket given by
$$
[e_{1},e_{2}]=0, \quad [e_{1},e_{3}]=-e_{2} \quad \mbox{ and } \quad [e_{2},e_{3}]=e_{1}
$$
and anchor $\rho\equiv 0$. Let $\left\{e^{1},e^{2},e^{3}\right\}$ denote the dual basis. 
Note that $d^{E}(e^{3})$=0, since $C_{\alpha\beta}^{3}=0$. Note also that $\mbox{Ker}(\rho)=\left\{e_{1},e_{2},e_{3}\right\}$ 
and $e^{3}(e_{3})=1\not=0$, that is, the second condition in Lemma \ref{lemma-sec} is not satisfied and therefore $e^3$ is not locally exact.
\end{ex}

\section{The inverse problem of the calculus of variations on Lie algebroids}\label{S:Lie}

We first need to introduce briefly Lagrangian mechanics on Lie algebroids so that the geometric framework of the inverse problem on Lie
algebroids can be described. 

\subsection{Lagrangian mechanics on Lie algebroids} \label{mechanics}
We give a derivation of the Euler-Lagrange equations for a Lagrangian on a Lie algebroid following \cite{Martinez}. These equations were previously derived in \cite{WEINST} using the Poisson structure in the dual bundle.

The vertical endomorphism and the Liouville vector field on tangent bundles can be generalized to Lie algebroids. 
Note that these are the two ingredients needed to define the concept of a SODE section. First we give the definitions of the vertical and complete lifts of a section of $E\longrightarrow M$ to a section of $\mathcal{L}^{\tau}E\longrightarrow E$.

\begin{definition}
Let $X\in\Gamma(E)$. 
\begin{itemize}
\item The vertical lift of $X$ is the section $X^{v}\in\Gamma(\mathcal{L}^{\tau}E)$ defined by
$X^{v}(a)=(0,X(\tau(a))_{a}^{v}), a\in E$, where for each pair $a,b\in E$, $b_{a}^{v}$ acts on a function $F\in\mathcal{C}^{\infty}(E)$ as
$$
b_{a}^{v}(F)=\left.\frac{d}{dt}\right|_{t=0} F\left(a+tb\right).
$$

\item The complete lift of $X$ is the unique section $X^{c}\in\Gamma(\mathcal{L}^{\tau}E)$ that projects over $X$ and satisfies
$$
\rho^{\tau}(X^{c})\widehat{\theta}=\widehat{\mathcal{L}_{X}^{E}\theta} \mbox{ for all } \theta\in\Gamma(E^{*}), 
$$
where $\widehat{\theta}:E\longrightarrow \mathbb{R}$ is the linear function defined by the pairing $\widehat{\theta}(e)=\langle \theta(\tau^{*}(e)),e \rangle$ and $\mathcal{L}_{X}^{E}:=i_{X}\circ d^{E}+d^{E}\circ i_{X}$ is the Lie derivative.
\end{itemize}
\end{definition}

\begin{definition}
Given a Lie algebroid $E\longrightarrow M$,
\begin{itemize}
\item the vertical endomorphism $S$ is the unique section of $\mathcal{L}^{\tau}E\otimes\mathcal{L}^{\tau}E\longrightarrow E$ satisfying
$$
S(X^{v})=0, \quad S(X^{c})=X^{v} \mbox{ for all } X\in\Gamma(E),
$$

\item the Euler section $\Delta$ is the section of $\mathcal{L}^{\tau}E\longrightarrow E$ defined by
$$
\Delta(a)=(0,a_{a}^{v}) \mbox{ for all } a\in E.
$$
\end{itemize}
\end{definition}

\begin{definition}
A section $\Gamma$ of $\mathcal{L}^{\tau}E\longrightarrow E$ is a second order differential equation (SODE) if it satisfies $S(\Gamma)=\Delta$. We will use the expressions SODE section and SODE field to distinguish $\Gamma$ from $\rho^{\tau}(\Gamma)$.
\end{definition}

With respect to the basis $\{ \tilde{T}_{\alpha}, \tilde{V}_{\alpha} \}$ defined in (\ref{basis}), the local expression of a SODE section is
$$
\Gamma=y^{\alpha}\tilde{T}_{\alpha}+\Gamma^{\alpha}\tilde{V}_{\alpha}.
$$
As $\rho^{\tau}(\tilde{T}_{\alpha})=\rho^{i}_{\alpha}\frac{\partial}{\partial x^{i}}$ and $\rho^{\tau}(\tilde{V}_{\alpha})=\frac{\partial}{\partial y^{\alpha}}$, the local expression for the SODE field is
$$
\rho^{\tau}(\Gamma)=y^{\alpha}\rho^{i}_{\alpha}\frac{\partial}{\partial x^{i}}+\Gamma^{\alpha}\frac{\partial}{\partial y^{\alpha}},
$$
so the integral curves of $\rho^{\tau}(\Gamma)$ are the solutions to $\dot{x}^{i}=\rho^{i}_{\alpha}y^{\alpha}$ and $\dot{y}^{\alpha}=\Gamma^{\alpha}(x,y)$. 

If we also have a Lagrangian function $L:E\longrightarrow \mathbb{R}$ on the Lie algebroid, then we can define the Poincar\'{e}-Cartan $1$-form $\theta_{L}$ and $2$-form $\omega_{L}$ and the energy function $E_{L}$ as follows:
$$
\theta_{L}=S(d^{E}L), \quad \omega_{L}=-d^{E}\theta_{L}, \quad E_{L}=\rho^{\tau}(\Delta)(L)-L.
$$

If $L$ is regular, then $\omega_{L}$ is a symplectic section and the Hamiltonian equation
$$
i_{\Gamma}\omega_{L}=d^{E}E_{L}
$$
has a unique solution $\Gamma_{L}$. The integral curves of $\Gamma_{L}$ are the integral curves of $\rho^{\tau}(\Gamma_{L})$, which are those locally satisfying the Euler-Lagrange equations for a Lie algebroid:
\begin{eqnarray*}
&&\frac{dx^{i}}{dt}=\rho^{i}_{\alpha}y^{\alpha}, \\
&&\frac{d}{dt}\left( \frac{\partial L}{\partial y^{\alpha}} \right)=\rho^{i}_{\alpha}\frac{\partial L}{\partial x^{i}}-C^{\gamma}_{\alpha \beta}y^{\beta}\frac{\partial L}{\partial y^{\gamma}},
\end{eqnarray*}
where $(x^{i})$ are the coordinates on $M$ and $(x^{i},y^{\alpha})$ the coordinates on $E$.

Note that for the special cases $(E=TQ,[\cdot,\cdot],\rho=Id)$ and $(\mathfrak{g},[\cdot,\cdot],\rho=0)$ we recover the Euler-Lagrange equations and the Euler-Poincar\'{e} equations respectively.

\subsection{The inverse problem} \label{inverse}
In this section we recover the Helmholtz conditions for a SODE on a Lie algebroid and give a characterization of the inverse problem for regular Lie algebroids.

Let $\Gamma$ be a SODE on $E$, locally written as $\Gamma=y^{\alpha}\tilde{T}_{\alpha}+\Gamma^{\alpha}\tilde{V}_{\alpha}$. The inverse problem poses the following question: When is it possible to find a nondegenerate matrix of multipliers $g_{\alpha\beta}(x,y)$ such that
\begin{equation} \label{problem}
g_{\alpha\beta}(\dot{y}^{\alpha}-\Gamma^{\alpha})=\frac{d}{dt}\left( \frac{\partial L}{\partial y^{\beta}} \right)-\rho^{i}_{\beta}\frac{\partial L}{\partial x^{i}}+C^{\gamma}_{\beta\nu}y^{\nu}\frac{\partial L}{\partial y^{\gamma}}
\end{equation}
has a regular solution $L$? If it is possible then $\Gamma$ is called variational.

Given a SODE $\Gamma$ on $E$ and a local diffeomorphism $F:E\longrightarrow E^{*}$, we define a section of $(\mathcal{L}^{\tau}E)^{*}\longrightarrow E$ by $\Theta_{\Gamma,F}:=A_{E}\circ\mathcal{L}F\circ\Gamma$:
$$
\xymatrix{
\mathcal{L}^{\tau}E \ar[r]^{\mathcal{L}F} \ar[r]^{\mathcal{L}F} & \mathcal{L}^{\tau^{*}}E \ar[r]^{A_{E}} & (\mathcal{L}^{\tau}E)^{*} \\
E \ar[r]^{F} \ar[u]^{\Gamma} \ar[urr]_{\Theta_{\Gamma,F}} & E^{*}
}
$$

In local coordinates the diagram is the following:
$$
\xymatrix{
(x^{i},y^{\alpha},y^{\alpha},\Gamma^{\alpha}) \ar[r]^/-15pt/{\mathcal{L}F} & (x^{i},F_{\alpha},y^{\alpha},\frac{\partial F_{\alpha}}{\partial x^{i}}\rho^{i}_{\beta}y^{\beta}+\frac{\partial F_{\alpha}}{\partial y^{\beta}}\Gamma^{\beta}) \ar[r]^/-15pt/{A_{E}} & (x^{i},y^{\alpha},\frac{\partial F_{\alpha}}{\partial x^{i}}\rho^{i}_{\beta}y^{\beta}+\frac{\partial F_{\alpha}}{\partial y^{\beta}}\Gamma^{\beta}+C^{\gamma}_{\alpha\beta}F_{\gamma}y^{\beta},F_{\alpha}) \\
(x^{i},y^{\alpha}) \ar[r]^/+5pt/{F} \ar[u]^{\Gamma} \ar[urr]_{\Theta_{\Gamma,F}} & (x^{i},F_{\alpha}) 
}
$$

Let $\left\{\tilde{T}^{\gamma},\tilde{V}^{\gamma}\right\}$ denote the dual basis of $\{\tilde{T}_{\gamma},\tilde{V}_{\gamma}\}$. Then locally we can write $\Theta_{\Gamma,F}=\theta_{\alpha}\tilde{T}^{\alpha}+F_{\alpha}\tilde{V}^{\alpha}$, where 
\begin{equation}\label{eq:thetaAlpha}
 \theta_{\alpha}=\frac{\partial F_{\alpha}}{\partial x^{i}}\rho^{i}_{\beta}y^{\beta}+\frac{\partial F_{\alpha}}{\partial y^{\beta}}\Gamma^{\beta}+C^{\gamma}_{\alpha\beta}F_{\gamma}y^{\beta}.
\end{equation}

The differential of $\Theta_{\Gamma,F}$ is
$$
d^{\mathcal{L}^{\tau}E}\Theta_{\Gamma,F}=\left( \frac{\partial \theta_{\gamma}}{\partial x^{i}}\rho^{i}_{\beta}-\frac{1}{2}\theta_{\alpha}C^{\alpha}_{\beta\gamma} \right)\tilde{T}^{\beta}\wedge \tilde{T}^{\gamma}+\frac{\partial\theta_{\gamma}}{\partial y^{\beta}}\tilde{V}^{\beta}\wedge \tilde{T}^{\gamma}+\frac{\partial F_{\gamma}}{\partial x^{i}}\rho^{i}_{\beta}\tilde{T}^{\beta}\wedge \tilde{V}^{\gamma}+\frac{\partial F_{\beta}}{\partial y^{\gamma}}\tilde{V}^{\beta}\wedge \tilde{V}^{\gamma}.
$$ 
Imposing $d^{\mathcal{L}^{\tau}E}\Theta_{\Gamma,F}=0$ we obtain the Helmholtz conditions
\begin{equation} \label{Helmholtz}
\frac{\partial F_{\beta}}{\partial y^{\gamma}}=\frac{\partial F_{\gamma}}{\partial y^{\beta}}, \quad \frac{\partial\theta_{\gamma}}{\partial y^{\beta}}=\frac{\partial F_{\beta}}{\partial x^{i}}\rho^{i}_{\gamma}, \quad \frac{\partial \theta_{\gamma}}{\partial x^{i}}\rho^{i}_{\beta}-\frac{1}{2}\theta_{\alpha}C^{\alpha}_{\beta\gamma}=\frac{\partial \theta_{\beta}}{\partial x^{i}}\rho^{i}_{\gamma}-\frac{1}{2}\theta_{\alpha}C^{\alpha}_{\gamma\beta}.
\end{equation}

As mentioned earlier, these conditions are not enough to guarantee the existence of a Lagrangian function on $E$, since the Poincar\'{e} lemma does not hold for an arbitrary Lie algebroid. We need to ask for the additional condition
$$
\Theta_{\Gamma,F}(Z)=0 \mbox{ for all } Z\in \Gamma(\mbox{Ker}(\rho^{\tau})).
$$

Let $\left\{ e_{I} \right\}$ denote a local basis of $\Gamma(\mbox{Ker}(\rho))$. Then $\left\{ \tilde{T}_{I} \right\}$ is a local basis of $\Gamma(\mbox{Ker}(\rho^{\tau}))$ and the condition on $\Gamma(\mbox{Ker}(\rho^{\tau}))$ is
\begin{equation}\label{eq:Ker}
\theta_I=\frac{\partial F_{I}}{\partial x^{i}}\rho^{i}_{\beta}y^{\beta}+\frac{\partial F_{I}}{\partial y^{\beta}}\Gamma^{\beta}+C^{\gamma}_{I\beta}F_{\gamma}y^{\beta}=0, \quad I=1,\ldots,d=\mbox{dim}(\mbox{Ker}(\rho)) \leq n.
\end{equation}
Using the local basis $\{e_I,e_a\}$ adapted to ${\rm Ker} (\rho)$, the anchor map has the local expression $\rho^i_I=0$. Then Helmholtz conditions in~\eqref{Helmholtz} become
\begin{align} 
\frac{\partial F_{\beta}}{\partial y^{\gamma}}&=\frac{\partial F_{\gamma}}{\partial y^{\beta}}, \nonumber \\
\frac{\partial\theta_{a}}{\partial y^{\beta}}&=\frac{\partial F_{\beta}}{\partial x^{i}}\rho^{i}_{a}, \quad 
\frac{\partial\theta_{I}}{\partial y^{\beta}}=0, \label{eq:HKer1} \\
\frac{\partial \theta_{a}}{\partial x^{i}}\rho^{i}_{b}&-\theta_{\alpha}C^{\alpha}_{ba}=\frac{\partial \theta_{b}}{\partial x^{i}}\rho^{i}_{a}, \quad 
\frac{\partial \theta_{I}}{\partial x^{i}}\rho^{i}_{a}-\theta_{\alpha}C^{\alpha}_{aI}=0, \quad  \theta_{\alpha}C^{\alpha}_{JI}=0. \label{eq:HKer2}
\end{align}
From the second equation in~\eqref{eq:HKer1} we deduce that $\theta_I(x,y)=\theta_I(x)$. Then the additional condition
 in~\eqref{eq:Ker} will
be satisfied if $\theta_I(x)=0$. 

\begin{thm}
A SODE section $\Gamma$ on a regular Lie algebroid $E$ is variational if and only if there is a local diffeomorphism $F: E\longrightarrow E^{*}$ such that $d^{\mathcal{L}^{\tau}E}\Theta_{\Gamma,F}=0$ and $\Theta_{\Gamma,F}(Z)=0$ for all $Z\in\Gamma(Ker(\rho^{\tau}))$.
\end{thm}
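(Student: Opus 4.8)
The plan is to reduce the statement to Lemma~\ref{lemma-sec} applied to the Lie algebroid $\mathcal{L}^{\tau}E\longrightarrow E$, by identifying the variationality of $\Gamma$ with the local exactness of $\Theta_{\Gamma,F}$ for a well-chosen local diffeomorphism $F$ over $M$, namely the Legendre transform of the candidate Lagrangian. I would start with two preliminary remarks. First, $\mathcal{L}^{\tau}E\longrightarrow E$ is a \emph{regular} Lie algebroid: since $\rho^{\tau}(\tilde{T}_{\alpha})=\rho^{i}_{\alpha}\partial/\partial x^{i}$ and $\rho^{\tau}(\tilde{V}_{\alpha})=\partial/\partial y^{\alpha}$, the rank of $\rho^{\tau}$ is constantly $n+q$, where $q$ is the (constant) rank of $\rho$, so Lemma~\ref{lemma-sec} is available for it. Second, for a function $L$ on $E$ one has $d^{\mathcal{L}^{\tau}E}L=\rho^{i}_{\alpha}\frac{\partial L}{\partial x^{i}}\,\tilde{T}^{\alpha}+\frac{\partial L}{\partial y^{\alpha}}\,\tilde{V}^{\alpha}$; comparing with $\Theta_{\Gamma,F}=\theta_{\alpha}\tilde{T}^{\alpha}+F_{\alpha}\tilde{V}^{\alpha}$, the equality $\Theta_{\Gamma,F}=d^{\mathcal{L}^{\tau}E}L$ is equivalent to $F_{\alpha}=\partial L/\partial y^{\alpha}$ together with $\theta_{\alpha}=\rho^{i}_{\alpha}\,\partial L/\partial x^{i}$.

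For the implication ``$\Leftarrow$'', assume $d^{\mathcal{L}^{\tau}E}\Theta_{\Gamma,F}=0$ and $\Theta_{\Gamma,F}(Z)=0$ for all $Z\in\Gamma(\mbox{Ker}(\rho^{\tau}))$. Lemma~\ref{lemma-sec}, applied to $\mathcal{L}^{\tau}E$, then provides a local function $L\colon E\longrightarrow\mathbb{R}$ with $\Theta_{\Gamma,F}=d^{\mathcal{L}^{\tau}E}L$, hence $F_{\alpha}=\partial L/\partial y^{\alpha}$ and $\theta_{\alpha}=\rho^{i}_{\alpha}\,\partial L/\partial x^{i}$. Because $F$ is a local diffeomorphism of the form $(x^{i},y^{\alpha})\mapsto(x^{i},F_{\alpha})$, the block $\partial F_{\alpha}/\partial y^{\beta}=\partial^{2}L/\partial y^{\alpha}\partial y^{\beta}$ is non-degenerate and $L$ is regular. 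Substituting $F_{\alpha}=\partial L/\partial y^{\alpha}$ into the expression~\eqref{eq:thetaAlpha} for $\theta_{\alpha}$ and equating it with $\rho^{i}_{\alpha}\,\partial L/\partial x^{i}$ gives exactly the Euler--Lagrange identity for $L$ evaluated along $\Gamma$; then, writing $g_{\alpha\beta}=\partial^{2}L/\partial y^{\alpha}\partial y^{\beta}$ and expanding $\frac{d}{dt}\big(\partial L/\partial y^{\beta}\big)$ along curves with $\dot{x}^{i}=\rho^{i}_{\alpha}y^{\alpha}$, that identity becomes precisely~\eqref{problem}. Since $g$ is non-degenerate, $\Gamma$ is variational.

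For ``$\Rightarrow$'', suppose $\Gamma$ is variational, so there are a regular $L$ and a non-degenerate $g_{\alpha\beta}$ satisfying~\eqref{problem}. Comparing in~\eqref{problem} the coefficients of the free variables $\dot{y}^{\alpha}$ forces $g_{\alpha\beta}=\partial^{2}L/\partial y^{\alpha}\partial y^{\beta}$, while the part independent of $\dot{y}$ is exactly the Euler--Lagrange equation for $L$ along $\Gamma$. Take $F$ to be the Legendre transform $F(x^{i},y^{\alpha})=(x^{i},\partial L/\partial y^{\alpha})$, which is a local diffeomorphism over $M$ by regularity of $L$. Substituting $F_{\alpha}=\partial L/\partial y^{\alpha}$ into~\eqref{eq:thetaAlpha} and using the Euler--Lagrange equation just obtained yields $\theta_{\alpha}=\rho^{i}_{\alpha}\,\partial L/\partial x^{i}$, hence $\Theta_{\Gamma,F}=d^{\mathcal{L}^{\tau}E}L$. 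Therefore $d^{\mathcal{L}^{\tau}E}\Theta_{\Gamma,F}=d^{\mathcal{L}^{\tau}E}d^{\mathcal{L}^{\tau}E}L=0$, and since $\Theta_{\Gamma,F}$ is locally exact, the ``$\Rightarrow$'' part of Lemma~\ref{lemma-sec} for $\mathcal{L}^{\tau}E$ gives $\Theta_{\Gamma,F}(Z)=0$ for every $Z\in\Gamma(\mbox{Ker}(\rho^{\tau}))$.

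I expect the only genuinely delicate step to be the passage between the multiplier form~\eqref{problem} of variationality and the two pointwise identities $g_{\alpha\beta}=\partial^{2}L/\partial y^{\alpha}\partial y^{\beta}$ and $\theta_{\alpha}=\rho^{i}_{\alpha}\,\partial L/\partial x^{i}$: one must keep careful track of which variables are free (the $\dot{y}^{\alpha}$), of the term $\rho^{i}_{\alpha}y^{\alpha}$ hidden in $\dot{x}^{i}$, and of the bracket contribution $C^{\gamma}_{\alpha\beta}F_{\gamma}y^{\beta}$ in~\eqref{eq:thetaAlpha}. Everything else is a direct transcription of the definitions, once the regularity of $\mathcal{L}^{\tau}E$ has been noted so that Lemma~\ref{lemma-sec} applies.
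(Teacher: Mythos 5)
Your proposal is correct and follows essentially the same route as the paper: both directions hinge on Lemma \ref{lemma-sec} applied to the (regular) prolongation $\mathcal{L}^{\tau}E$ and on taking $F$ to be the Legendre transform, with the same coordinate identifications $F_{\alpha}=\partial L/\partial y^{\alpha}$ and $\theta_{\alpha}=\rho^{i}_{\alpha}\,\partial L/\partial x^{i}$. The only minor difference is in the necessity direction, where you establish $\Theta_{\Gamma,F}=d^{\mathcal{L}^{\tau}E}L$ outright so that closedness and the kernel condition follow at once from the easy direction of the lemma, whereas the paper verifies the Helmholtz conditions directly via the structure equations and checks the kernel condition separately; you also make explicit the constant rank of $\rho^{\tau}$, which the paper leaves implicit.
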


\begin{proof}
$\Leftarrow$ If there is a local diffeomorphism $F$ such that $d^{\mathcal{L}^{\tau}E}\Theta_{\Gamma,F}=0$ and $\Theta_{\Gamma,F}(Z)=0$ for all $Z\in\Gamma(Ker(\rho^{\tau}))$ then by Lemma \ref{lemma-sec} we have $\Theta_{\Gamma,F}=d^{\mathcal{L}^{\tau}E}L$ for a locally defined function $L:E\longrightarrow \mathbb{R}$. In local coordinates we get 
$$
F_{\beta}=\frac{\partial L}{\partial y^{\beta}} \quad \mbox{ and } \quad \frac{\partial F_{\gamma}}{\partial y^{\beta}}\Gamma^{\beta}+y^{\beta}\frac{\partial F_{\gamma}}{\partial x^{i}}\rho^{i}_{\beta}+y^{\beta}F_{\alpha}C^{\alpha}_{\gamma\beta}=\frac{\partial L}{\partial x^{i}}\rho^{i}_{\gamma}.
$$
Therefore $\frac{d}{dt}\left( \frac{\partial L}{\partial y^{\beta}} \right)-\rho^{i}_{\beta}\frac{\partial L}{\partial x^{i}}+C^{\gamma}_{\beta \nu}y^{\nu}\frac{\partial L}{\partial y^{\gamma}}=\frac{\partial F_{\beta}}{\partial y^{\gamma}}(\dot{y}^{\gamma}-\Gamma^{\gamma})$, $g_{\beta\gamma}=\frac{\partial F_{\beta}}{\partial y^{\gamma}}$ are the multipliers for the problem and $L$ is regular since $(g_{\beta\gamma})$ is non-degenerate.

$\Rightarrow$ If $\Gamma$ is variational then there is a regular Lagrangian $L$ such that equation (\ref{problem}) 
is satisfied with $g_{\alpha\beta}=\frac{\partial^{2}L}{\partial y^{\alpha}\partial y^{\beta}}$. Taking $F$ to be the
 Legendre transformation, which is a local diffeomorphism, it is straightforward to check that equations (\ref{Helmholtz}) are
 satisfied using $\theta_{\gamma}=\frac{\partial L}{\partial x^{i}}\rho^{i}_{\gamma}$ and the structure equation 
$\rho^{j}_{\alpha}\frac{\partial\rho^{i}_{\beta}}{\partial x^{j}}-\rho^{j}_{\beta}\frac{\partial \rho^{i}_{\alpha}}{\partial x^{j}}=\rho^{i}_{\gamma}C^{\gamma}_{\alpha\beta}$ for the last set. If $Z\in\Gamma(Ker(\rho^{\tau}))$, $Z=z^{\alpha}\tilde{T}_{\alpha}$, then we also get
$$
\Theta_{\Gamma,F}(Z)=\left(\frac{\partial^{2}L}{\partial y^{\beta} \partial y^{\gamma}}\Gamma^{\beta}+y^{\beta}\frac{\partial^{2}L}{\partial x^{i}\partial y^{\gamma}}\rho^{i}_{\beta}+y^{\beta}\frac{\partial L}{\partial y^{\alpha}}C^{\alpha}_{\gamma\beta}\right)z^{\gamma}=\frac{\partial L}{\partial x^{i}}\rho^{i}_{\gamma}z^{\gamma}=0.
$$
\end{proof}

\begin{ex} \label{ex:tangent}
Note that for the Lie algebroid $(E=TQ,[\cdot,\cdot],\rho=Id)$, where $[\cdot,\cdot]$ is the Lie bracket of vector fields, we recover the equations
$$
\frac{\partial F_{\beta}}{\partial y^{\gamma}}=\frac{\partial F_{\gamma}}{\partial y^{\beta}}, \quad \frac{\partial \Gamma(F_{\gamma})}{\partial y^{\beta}}=\frac{\partial F_{\beta}}{\partial x^{\gamma}}, \quad \frac{\partial\Gamma(F_{\gamma})}{\partial x^{\beta}}=\frac{\partial\Gamma(F_{\beta})}{\partial x^{\gamma}}
$$
given in \cite{BFM} and the condition involving $\mbox{Ker}(\rho^{\tau})$ is void. 
\end{ex}

\begin{ex} \label{ex:algebra}
For a Lie algebra $(\mathfrak{g},[\cdot,\cdot],\rho=0)$ we get the Helmholtz conditions \cite{CM2008}
$$
\frac{\partial F_{\beta}}{\partial y^{\gamma}}=\frac{\partial F_{\gamma}}{\partial y^{\beta}}, \quad \frac{\partial\left( \frac{\partial F_{\gamma}}{\partial y^{\tau}}\Gamma^{\tau}+C^{r}_{\gamma \tau}F_{r}y^{\tau} \right)}{\partial y^{\beta}}=0, \quad \left( \frac{\partial F_{\alpha}}{\partial y^{\tau}}\Gamma^{\tau}+C^{r}_{\alpha \tau}F_{r}y^{\tau} \right) C^{\alpha}_{\beta \gamma}=0.
$$
In this case the condition on $\Gamma(\mbox{Ker}(\rho^{\tau}))$ is $\frac{\partial F_{\alpha}}{\partial y^{\tau}}\Gamma^{\tau}+C^{r}_{\alpha \tau}F_{r}y^{\tau}=0$, which makes the last two conditions always true. Note that the symmetry gives a function $L$ such that $F_{\gamma}=\frac{\partial L}{\partial y^{\gamma}}$ and then the remaining conditions are just the Euler-Poincar\'{e} equations for $L$.
\end{ex}

We will use the following term in order to avoid confusion: 
\begin{definition}
A SODE $\Gamma$ on $E$ will be called weak variational if there is a local diffeomorphism $F:E\longrightarrow E^{*}$ such that $d^{\mathcal{L}^{\tau}E}\Theta_{\Gamma,F}=0$.
\end{definition}

Hence, a SODE $\Gamma$ on $E$ is variational if it is weak variational and $\Theta_{\Gamma,F}(Z)=0$ for all $Z\in\Gamma(\mbox{Ker}(\rho^{\tau}))$. This definition, for the case of a Lie algebra, is equivalent to satisfying the reduced Helmholtz conditions given in \cite{CM2008}.

Due to the lack of a Poincar\'{e} lemma we give a generalization of the Theorem by Crampin in \cite{81Crampin} for weak variational SODEs substituting the closedness condition by local exactness of a section of the bundle $(\mathcal{L}^{\tau}E)^{*}\wedge (\mathcal{L}^{\tau}E)^{*}\longrightarrow E$, which plays the role of the Cartan 2-section generalizing the Poincar\'{e}-Cartan 2-form.
 
\begin{thm} \label{crampin-alg}
A SODE $\Gamma$ on a regular Lie algebroid $E$ is weak variational if and only if there is a nondegenerate section $\Omega$ of $(\mathcal{L}^{\tau}E)^{*}\wedge (\mathcal{L}^{\tau}E)^{*}\longrightarrow E$ such that 
\begin{itemize}
\item $\mathcal{L}_{\Gamma}\Omega=0$,
\item $\Omega=d^{\mathcal{L}^{\tau}E}\Theta$ for some locally defined section $\Theta$ of $(\mathcal{L}^{\tau}E)^{*}\longrightarrow E$ ,
\item $\Omega(\tilde{V}_{\alpha},\tilde{V}_{\beta})=0$ for all $\alpha, \beta$.
\end{itemize}
\end{thm}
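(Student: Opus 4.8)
The plan is to attach to each locally defined diffeomorphism $F\colon E\to E^{*}$ over $M$ the $1$-section $\theta_{F}$ of $(\mathcal{L}^{\tau}E)^{*}\to E$ given locally by $\theta_{F}=F_{\alpha}\tilde{T}^{\alpha}$, $F_{\alpha}$ being the components of $F$ (intrinsically $\theta_{F}=(\mathcal{L}F)^{*}\lambda_{E}$, so that $-d^{\mathcal{L}^{\tau}E}\theta_{F}=(\mathcal{L}F)^{*}\Omega_{E}$). The heart of the argument is the Tulczyjew-type identity
\[
\Theta_{\Gamma,F}=i_{\Gamma}\,d^{\mathcal{L}^{\tau}E}\theta_{F}+d^{\mathcal{L}^{\tau}E}\ell_{F},\qquad \ell_{F}(e):=\langle F(e),e\rangle\ \ (\text{locally }\ell_{F}=y^{\alpha}F_{\alpha}),
\]
valid for \emph{any} SODE section $\Gamma$ and \emph{any} bundle map $F$ over $M$. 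It is checked directly in the coordinates $(x^{i},y^{\alpha},z^{\alpha},v^{\alpha})$: the $\tilde{V}^{\beta}$-components agree at once, and the $\tilde{T}^{\beta}$-components agree after inserting $\Gamma=y^{\alpha}\tilde{T}_{\alpha}+\Gamma^{\alpha}\tilde{V}_{\alpha}$ and using the cancellation $C^{\gamma}_{\alpha\beta}+C^{\gamma}_{\beta\alpha}=0$ of the bracket terms coming from $[\tilde{T}_{\alpha},\tilde{T}_{\beta}]^{\tau}=C^{\gamma}_{\alpha\beta}\tilde{T}_{\gamma}$ against the term $C^{\gamma}_{\alpha\beta}F_{\gamma}y^{\beta}$ of $\theta_{\alpha}$ in~\eqref{eq:thetaAlpha}. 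Two further elementary facts are read off from the local expression of $d^{\mathcal{L}^{\tau}E}$: first, $d^{\mathcal{L}^{\tau}E}\theta_{F}(\tilde{V}_{\alpha},\tilde{V}_{\beta})=0$ for every $F$; second, since $d^{\mathcal{L}^{\tau}E}\theta_{F}$ already annihilates pairs of $\tilde{V}$'s, it is nondegenerate as a section of $(\mathcal{L}^{\tau}E)^{*}\wedge(\mathcal{L}^{\tau}E)^{*}$ if and only if the matrix $(\partial F_{\alpha}/\partial y^{\beta})$ is invertible, i.e.\ if and only if $F$ is a local diffeomorphism.

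For the implication $\Rightarrow$, let $\Gamma$ be weak variational, witnessed by a local diffeomorphism $F$ with $d^{\mathcal{L}^{\tau}E}\Theta_{\Gamma,F}=0$. I would take $\Omega:=-d^{\mathcal{L}^{\tau}E}\theta_{F}$ (equivalently $\Omega=(\mathcal{L}F)^{*}\Omega_{E}$) and $\Theta:=-\theta_{F}$. Then $\Omega=d^{\mathcal{L}^{\tau}E}\Theta$ is exact by construction, $\Omega(\tilde{V}_{\alpha},\tilde{V}_{\beta})=0$ and $\Omega$ is nondegenerate by the two facts above, and applying $d^{\mathcal{L}^{\tau}E}$ to the identity gives $d^{\mathcal{L}^{\tau}E}\!\big(i_{\Gamma}\,d^{\mathcal{L}^{\tau}E}\theta_{F}\big)=d^{\mathcal{L}^{\tau}E}\Theta_{\Gamma,F}=0$; hence, using $d^{\mathcal{L}^{\tau}E}\Omega=0$ and the Cartan formula, $\mathcal{L}_{\Gamma}\Omega=d^{\mathcal{L}^{\tau}E}i_{\Gamma}\Omega=-d^{\mathcal{L}^{\tau}E}\!\big(i_{\Gamma}\,d^{\mathcal{L}^{\tau}E}\theta_{F}\big)=0$. (Keeping a Lagrangian in sight: the first equation of~\eqref{Helmholtz} yields $F_{\alpha}=\partial L/\partial y^{\alpha}$ locally, $\Omega$ is then $\omega_{L}$, and the Euler--Lagrange equations for $L$ hold only up to a $d^{E}$-closed section of $E^{*}$ pulled back from $M$, which is precisely what dies under $d^{\mathcal{L}^{\tau}E}$.)

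For the implication $\Leftarrow$, suppose $\Omega$ is as in the statement and pick, on a sufficiently small open set, a $1$-section $\Theta=\Theta^{T}_{\alpha}\tilde{T}^{\alpha}+\Theta^{V}_{\alpha}\tilde{V}^{\alpha}$ with $\Omega=d^{\mathcal{L}^{\tau}E}\Theta$. From $0=\Omega(\tilde{V}_{\alpha},\tilde{V}_{\beta})=\partial\Theta^{V}_{\beta}/\partial y^{\alpha}-\partial\Theta^{V}_{\alpha}/\partial y^{\beta}$ the fibrewise $1$-form $\sum_{\alpha}\Theta^{V}_{\alpha}\,dy^{\alpha}$ is closed, hence, after shrinking the domain so that the fibre slices are star-shaped, equal to $\sum_{\alpha}(\partial W/\partial y^{\alpha})\,dy^{\alpha}$ for some $W=W(x,y)$; replacing $\Theta$ by $\Theta-d^{\mathcal{L}^{\tau}E}W$, which leaves $\Omega$ unchanged, I may assume $\Theta=\theta_{F}$ with $F_{\alpha}:=\Theta^{T}_{\alpha}$. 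Nondegeneracy of $\Omega=d^{\mathcal{L}^{\tau}E}\theta_{F}$ together with $\Omega(\tilde{V}_{\alpha},\tilde{V}_{\beta})=0$ forces $(\partial F_{\alpha}/\partial y^{\beta})$ invertible, so $F\colon(x^{i},y^{\alpha})\mapsto(x^{i},F_{\alpha})$ is a local diffeomorphism $E\to E^{*}$. Finally, by the identity and $\mathcal{L}_{\Gamma}\Omega=0$ (again with $d^{\mathcal{L}^{\tau}E}\Omega=0$),
\[
d^{\mathcal{L}^{\tau}E}\Theta_{\Gamma,F}=d^{\mathcal{L}^{\tau}E}(i_{\Gamma}\Omega)+d^{\mathcal{L}^{\tau}E}d^{\mathcal{L}^{\tau}E}\ell_{F}=\mathcal{L}_{\Gamma}\Omega-i_{\Gamma}\,d^{\mathcal{L}^{\tau}E}\Omega=0,
\]
so $\Gamma$ is weak variational.

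The only place where the Lie algebroid structure genuinely enters is the identity above, and there the structure functions appear merely through the antisymmetry $C^{\gamma}_{\alpha\beta}=-C^{\gamma}_{\beta\alpha}$, so no hard computation is involved; the step I expect to require most care is the shrinking of the domain in the converse so that the fibrewise Poincaré lemma applies, together with bookkeeping of signs in the $\tilde{T}\wedge\tilde{T}$-components of the identity. Regularity of $E$ plays no essential role here beyond fixing the ambient conventions of the paper.
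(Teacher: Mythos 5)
Your proposal is correct and takes essentially the same route as the paper: your identity $\Theta_{\Gamma,F}=i_{\Gamma}d^{\mathcal{L}^{\tau}E}\theta_{F}+d^{\mathcal{L}^{\tau}E}\ell_{F}$ is, via Cartan's formula and $i_{\Gamma}\theta_{F}=\ell_{F}$, precisely the paper's key relation $\mathcal{L}_{\Gamma}F^{*}\lambda_{E}=\Theta_{\Gamma,F}$, and both directions (taking $\Omega=\pm\, d^{\mathcal{L}^{\tau}E}(F^{*}\lambda_{E})$, respectively removing the vertical part of $\Theta$ by a fibrewise potential and reading $F$ off the remaining $\tilde{T}$-components, with nondegeneracy of $(\partial F_{\alpha}/\partial y^{\beta})$ equivalent to that of $\Omega$) coincide with the published argument. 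The only cosmetic deviations are the overall sign in your choice of $\Omega$ and the fact that, after subtracting $d^{\mathcal{L}^{\tau}E}W$, the components $F_{\alpha}$ should be those of the modified section rather than the original $\Theta^{T}_{\alpha}$.
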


\begin{proof}
$\Rightarrow$ If $\Gamma$ is weak variational then there is a local diffeomorphism $F:E\longrightarrow E^{*}$ over $M$ such that $d^{\mathcal{L}^{\tau}E}\Theta_{\Gamma,F}=0$. Then define $\Omega=d^{\mathcal{L}^{\tau}E}(F^{*}\lambda_{E})$ which clearly satisfies the second condition and $\Omega(\tilde{V}_{\alpha},\tilde{V}_{\beta})=0$. Note that $\mathcal{L}_{\Gamma}F^{*}\lambda_{E}=\Theta_{\Gamma,F}$ and hence it also satisfies $\mathcal{L}_{\Gamma}\Omega=d^{\mathcal{L}^{\tau}E}\Theta_{\Gamma,F}$=0. Finally the non-degeneracy of $\left( \frac{\partial F_{\gamma}}{\partial y^{\beta}} \right)$ implies the non-degeneracy of $\Omega$.

$\Leftarrow$ If we write $\Theta=\mu_\alpha\tilde{T}^{\alpha}+\nu_\alpha \tilde{V}^{\alpha}$ then the condition $d^{\mathcal{L}^{\tau}E}\Theta(\tilde{V}_{\alpha},\tilde{V}_{\beta})=\frac{\partial \nu_{\alpha}}{\partial y^{\beta}}-\frac{\partial \nu_\beta}{\partial y^\alpha}=0$ gives a locally defined function $f:E\longrightarrow \mathbb{R}$ such that $\nu_\alpha=\frac{\partial f}{\partial y^\alpha}$ and then $d^{\mathcal{L}^{\tau}E}f(\tilde{V}_{\alpha})=\Theta(\tilde{V}_{\alpha})=\nu_\alpha$. Define $\tilde{\Theta}=\Theta-d^{\mathcal{L}^{\tau}E}f$, which satisfies $\tilde{\Theta}(\tilde{V}_{\alpha})=0$ and $d^{\mathcal{L}^{\tau}E}\tilde{\Theta}=\Omega$. We seek to have $\tilde{\Theta}=F^{*}\lambda_{E}$ for some local diffeomorphism $F$, so we define $F:E\longrightarrow E^{*}$ by $\langle F(v_x), w_x\rangle=\langle \tilde{\Theta}(v_x),W_x\rangle$, where $x\in M$, $v_x,w_x\in E$ and $W_x\in\mathcal{L}^{\tau}E$ is such that $\tau^{\tau}(W_x)=w_x$. This definition does not depend on the choice of $W_x$ since $\tilde{\Theta}$ vanishes on vertical sections. Locally if we write $\tilde{\Theta}=A_{\alpha}\tilde{T}^{\alpha}+B_{\alpha}\tilde{V}^{\alpha}$, then $\tilde{\Theta}=\left( \frac{\partial f}{\partial x^{i}}\rho^{i}_{\alpha}-A_{\alpha} \right)\tilde{T}^{\alpha}=:F_{\alpha}\tilde{T}^{\alpha}$ and the non-degeneracy of $\left( \frac{\partial F_{\alpha}}{\partial y^{\beta}} \right)$ follows from the non-degeneracy of $\Omega$. Finally $d^{\mathcal{L}^{\tau}E}\Theta_{\Gamma,F}=d^{\mathcal{L}^{\tau}E}\mathcal{L}_{\Gamma}F^{*}\lambda_{E}=d^{\mathcal{L}^{\tau}E}\mathcal{L}_{\Gamma}\tilde{\Theta}=\mathcal{L}_{\Gamma}\Omega=0$, that is, $\Gamma$ is weak variational.
\end{proof}

In \cite{CM2008} some variational examples are found by requiring only that the Helmholtz conditions are  satisfied, but this is not generally the case. As we have seen, in order to guarantee the existence of a Lagrangian for a SODE on a Lie algebroid we need to ask for an extra condition. Next we give an example of a SODE on a Lie algebra which is weak variational but not variational.

\begin{ex}\label{examples}
Let $(y^{1},y^{2},y^{3})$ denote the coordinates for $\mathfrak{g}=\mathfrak{se}(2)$ corresponding to the basis given in Example \ref{se2} and define the following SODE:

$$
\begin{array}{cccc}
\Gamma: & \mathfrak{se}(2) & \longrightarrow & \mathcal{L}^{\tau}\mathfrak{se}(2)\cong 2\mathfrak{g} \\& (y^{1},y^{2},y^{3}) & \longmapsto & (y^{1},y^{2},y^{3},\Gamma^{1}=y^{2}y^{3},\Gamma^{2}=-y^{1}y^{3},\Gamma^{3}=1)
\end{array}
$$

Consider the local diffeomorphism from $\mathfrak{se}(2)$ to $\mathfrak{se}(2)^{*}$ given by $F_{1}=y^{1}, F_{2}=y^{2}, F_{3}=y^{3}$ and compute $\theta_{1}=\theta_{2}=0$ and $\theta_{3}=1$ to get $\Theta_{\Gamma,F}=\tilde{T}^{3}+F_{\alpha}\tilde{V}^{\alpha}$. Then $d^{\mathcal{L}^{\tau}\mathfrak{se}(2)}\Theta_{\Gamma,F}=-\frac{1}{2}\theta_{3}C^{3}_{\beta\gamma}\tilde{T}^{\beta}\wedge\tilde{T}^{\gamma}=0$ since $C^{3}_{\beta\gamma}=0$, that is, the Helmholtz conditions are satisfied, so $\Gamma$ is weak variational, BUT since $\Theta_{\Gamma,F}(e_{3}^{c})=1\not=0$, $\Gamma$ is not variational. 

The corresponding left-invariant SODE on $TG$ is given by
\begin{equation*}
\ddot{x}=0, \quad \ddot{y}=0, \quad \ddot{\theta}=1,
\end{equation*}
where $(x,y,\theta)$ are coordinates on $SE(2)$. According to \cite[Theorem 3]{CM2008} this SODE is variational, that is, we can find a Lagrangian on $TG$, but not an invariant one. It is actually straightforward to obtain the Lagrangian $L=\frac{1}{2}\left(\dot{x}^{2}+\dot{y}^{2}+\dot{\theta}^{2}\right)+\theta$.
\end{ex}

\begin{remark}
It is also possible to give an example of a SODE on a Lie algebra $\mathfrak{g}$ which is not variational on $\mathfrak{g}$ and also not weak variational but variational on $TG$. See Example 8.3 Case 2C in \cite{CM2008}.
\end{remark}

\section{Morphisms and the variational problem}\label{Sec:Morph}

The geometric description of the inverse problem on Lie algebroids given in the previous section leads to a generalization of some results in~\cite{CM2008}, where the relationship between the inverse problem on the tangent bundle to a Lie group and the corresponding reduced inverse problem on the Lie algebra is studied. By means of morphisms of Lie algebroids the same relationship can be studied for the inverse problem on Lie algebroids. Moreover, the proof of the following result is intrinsic, in contrast to the proof for the Lie group case in~\cite{CM2008}.

\begin{thm}\label{red}
Let $\Psi: E\rightarrow E'$ be 
 a morphism of Lie algebroids, and consider its prolongation 
${\mathcal L} \Psi: {\mathcal L}^\tau E\rightarrow {\mathcal L}^{\tau'} E'$. Let $\Gamma$ and $\Gamma'$ be  SODE sections on $E$ and $E'$ respectively such that
\[
{\mathcal L} \Psi\circ \Gamma=\Gamma'\circ \Psi\, .
\]
If $\Gamma'$ is weak variational (variational) then $\Gamma$ is weak variational (variational). 
\end{thm}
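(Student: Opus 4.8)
The plan is to pull back the relevant objects along $\Psi$ and $\mathcal{L}\Psi$ and check that the characterizing data for the inverse problem transfer. First I would treat the weak variational case. Since $\Gamma'$ is weak variational, there is a local diffeomorphism $F':E'\rightarrow (E')^{*}$ with $d^{\mathcal{L}^{\tau'}E'}\Theta_{\Gamma',F'}=0$, where $\Theta_{\Gamma',F'}=A_{E'}\circ\mathcal{L}F'\circ\Gamma'$ is a section of $(\mathcal{L}^{\tau'}E')^{*}$. The natural candidate on $E$ is the pullback $\Theta:=(\mathcal{L}\Psi)^{*}\Theta_{\Gamma',F'}$, a section of $(\mathcal{L}^{\tau}E)^{*}\rightarrow E$. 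Because $\mathcal{L}\Psi$ is a morphism of Lie algebroids (this is part of the statement, and can also be checked directly from the local formula for $\mathcal{L}F$-type maps together with the morphism property of $\Psi$), the differential commutes with pullback: $d^{\mathcal{L}^{\tau}E}\Theta=(\mathcal{L}\Psi)^{*}d^{\mathcal{L}^{\tau'}E'}\Theta_{\Gamma',F'}=0$. So $\Omega:=d^{\mathcal{L}^{\tau}E}\Theta=0$ in the naive pullback sense — but this is \emph{too} degenerate, and here is the subtlety: $(\mathcal{L}\Psi)^{*}$ need not send a \emph{nondegenerate} section to a nondegenerate one, and $\Theta$ need not be of the form $\Theta_{\Gamma,F}$ for a local diffeomorphism $F$. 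So rather than pulling back $F'$ directly, I would instead pull back the \emph{2-section}: set $\Omega_{E'}=-d^{\mathcal{L}^{(\tau')^{*}}E'}\lambda_{E'}$ and $\omega=(F')^{*}\Omega_{E'}$, a symplectic section on $E'$, with associated $\Theta_{\Gamma',F'}=\mathcal{L}_{\Gamma'}((F')^{*}\lambda_{E'})$; then define $\Omega=(\mathcal{L}\Psi)^{*}\omega$ on $\mathcal{L}^{\tau}E$ and try to apply Theorem~\ref{crampin-alg}.

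The three hypotheses of Theorem~\ref{crampin-alg} are what I would verify. For $\mathcal{L}_{\Gamma}\Omega=0$: using $\mathcal{L}\Psi\circ\Gamma=\Gamma'\circ\Psi$ and that $\mathcal{L}\Psi$ is a Lie algebroid morphism, the Lie derivative along $\Gamma$ of a pullback equals the pullback of the Lie derivative along $\Gamma'$ — this is a standard naturality computation using $i_{\Gamma}(\mathcal{L}\Psi)^{*}=(\mathcal{L}\Psi)^{*}i_{\Gamma'}$ (valid precisely because $\Gamma$ and $\Gamma'$ are $\mathcal{L}\Psi$-related) and commutation of $d$ with pullback. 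Hence $\mathcal{L}_{\Gamma}\Omega=(\mathcal{L}\Psi)^{*}\mathcal{L}_{\Gamma'}\omega=0$. For local exactness $\Omega=d^{\mathcal{L}^{\tau}E}\Theta$: take $\Theta=(\mathcal{L}\Psi)^{*}\bigl((F')^{*}\lambda_{E'}\bigr)$ locally, or $\Theta=(\mathcal{L}\Psi)^{*}\Theta_{\Gamma',F'}=\mathcal{L}_{\Gamma}\Theta'$; commutation of $d$ with pullback gives $d^{\mathcal{L}^{\tau}E}\Theta=(\mathcal{L}\Psi)^{*}\omega=\Omega$. For $\Omega(\tilde{V}_{\alpha},\tilde{V}_{\beta})=0$: one checks that $\mathcal{L}\Psi$ sends vertical sections $\tilde{V}_{\alpha}\in\Gamma(\mathcal{L}^{\tau}E)$ to (combinations of) vertical sections in $\Gamma(\mathcal{L}^{\tau'}E')$ — immediate from the local expression of $\mathcal{L}$ applied to a fibered map, where the $z^{\alpha}$-component of $\tilde{V}_{\alpha}$ is zero — and then $\omega$ vanishes on vertical pairs because $\omega=(F')^{*}\Omega_{E'}$ and $\Omega_{E'}$ does (equivalently $\omega_{L'}$-type 2-sections kill $\tilde{V}\wedge\tilde{V}$). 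The genuine obstacle is \textbf{nondegeneracy} of $\Omega$: pullback by $\mathcal{L}\Psi$ can kill directions, so in general $\Omega$ will \emph{not} be nondegenerate, which is exactly why the theorem is stated only with the implication ``if $\Gamma'$ is weak variational then $\Gamma$ is'' and not an equivalence. I expect this is where one must use an additional hypothesis implicit in the setup — namely that $\Psi$ (hence $\mathcal{L}\Psi$) be such that the relevant pullback preserves nondegeneracy, e.g.\ $\Psi$ fiberwise injective, or that the result is really being applied with $\Psi$ an epimorphism and the roles arranged so that $\mathcal{L}\Psi$ restricted to an appropriate complement is an isomorphism. I would state precisely which injectivity/surjectivity property of $\Psi$ is being assumed and locate the nondegeneracy argument there; this is the step to treat with care.

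For the variational case I would add the verification of the Ker-condition. Suppose $\Gamma'$ is variational, so additionally $\Theta_{\Gamma',F'}(Z')=0$ for all $Z'\in\Gamma(\mathrm{Ker}(\rho^{\tau'}))$. I claim $\mathcal{L}\Psi$ maps $\mathrm{Ker}(\rho^{\tau})$ into $\mathrm{Ker}(\rho^{\tau'})$: indeed $\rho^{\tau'}\circ\mathcal{L}\Psi=T\Psi\circ\rho^{\tau}$ (compatibility of anchors under the prolonged morphism), so if $\rho^{\tau}(Z)=0$ then $\rho^{\tau'}(\mathcal{L}\Psi(Z))=0$. Hence for $Z\in\Gamma(\mathrm{Ker}(\rho^{\tau}))$ we get $\Theta(Z)=\bigl((\mathcal{L}\Psi)^{*}\Theta_{\Gamma',F'}\bigr)(Z)=\Theta_{\Gamma',F'}(\mathcal{L}\Psi(Z))=0$, where $\mathcal{L}\Psi(Z)\in\Gamma(\mathrm{Ker}(\rho^{\tau'}))$ (at least pointwise; one passes to sections locally). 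Combined with the weak variational conclusion and the characterization Theorem preceding this section — or directly with Lemma~\ref{lemma-sec}, which upgrades closed plus Ker-annihilating to locally exact, yielding a local Lagrangian $L$ on $E$ — this gives that $\Gamma$ is variational. The only delicate point here is again that one needs the $F$ produced in the weak variational step to be a genuine local diffeomorphism $E\rightarrow E^{*}$ so that $d^{\mathcal{L}^{\tau}E}\Theta=0$ together with $\Theta(Z)=0$ actually certifies variationality; this is handled by the nondegeneracy discussion above. I would present the proof by first establishing the naturality identities (commutation of $d^{\mathcal{L}}$ with $(\mathcal{L}\Psi)^{*}$, compatibility of $\rho^{\tau}$ and of $\mathcal{L}_{\Gamma}$), then assembling them into the two cases, flagging clearly the nondegeneracy hypothesis on $\Psi$ as the hinge of the argument.
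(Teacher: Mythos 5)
Your construction is essentially the paper's own proof: take the data supplied by Theorem \ref{crampin-alg} for $\Gamma'$ (a primitive $\Theta'$, in the forward direction $(F')^{*}\lambda_{E'}$, with $\Omega'=d^{\mathcal{L}^{\tau'}E'}\Theta'$), pull it back by $(\mathcal{L}\Psi)^{*}$, use the morphism property to commute $d^{\mathcal{L}}$ with the pullback, the $\mathcal{L}\Psi$-relatedness of $\Gamma$ and $\Gamma'$ to get $\mathcal{L}_{\Gamma}\circ(\mathcal{L}\Psi)^{*}=(\mathcal{L}\Psi)^{*}\circ\mathcal{L}_{\Gamma'}$, and $\mathcal{L}\Psi\circ S=S'\circ\mathcal{L}\Psi$ for the vertical condition, then invoke Theorem \ref{crampin-alg} again for $\Gamma$. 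Two remarks on where you diverge. For the variational refinement the paper argues simply that the pullback of an exact section is exact, whereas you check the kernel condition via $\rho^{\tau'}\circ\mathcal{L}\Psi=T\Psi\circ\rho^{\tau}$ and Lemma \ref{lemma-sec}; both routes work, but in yours you should add the small observation that the section $\Theta_{\Gamma,F}$ actually produced by the converse of Theorem \ref{crampin-alg} differs from $(\mathcal{L}\Psi)^{*}\Theta_{\Gamma',F'}$ by an exact term $d^{\mathcal{L}^{\tau}E}(\rho^{\tau}(\Gamma)f)$, which also annihilates $\Gamma(\mathrm{Ker}(\rho^{\tau}))$, so the kernel condition survives. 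As for the nondegeneracy of $\Omega=(\mathcal{L}\Psi)^{*}\Omega'$, which you single out as the hinge: you have not missed a hidden argument, because the paper's proof does not address it at all --- it simply asserts that the pulled-back data ``satisfies the conditions of Theorem \ref{crampin-alg}''. Your caution is warranted: nondegeneracy of the pullback needs something like fiberwise injectivity of $\mathcal{L}\Psi$ and is automatic in the intended application (the quotient morphism $TQ\rightarrow TQ/G$ of Section \ref{Sec:Atiyah}, which is fiberwise bijective), but it is not supplied for a general morphism $\Psi$; flagging this goes beyond the paper rather than falling short of it.
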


\begin{proof}
Since ${\mathcal L}\Psi$ is a morphism of Lie algebroids we have that 
$({\mathcal L}\Psi)^* d^{{\mathcal L}^{\tau'} E'}=d^{{\mathcal L}^\tau E}({\mathcal L}\Psi)^*$.
From Theorem \ref{crampin-alg} there exists an exact section $\Theta'\in\Gamma((\mathcal{L}^{\tau'}E')^{*})$ such that $\Omega'=d^{{\mathcal L}^{\tau'} E'}\Theta'$ satisfies
${\mathcal L}_{\Gamma'}\Omega'=0$ and the restriction of $\Omega'$ to vertical sections vanishes, that is, $\Omega' (U', V')=0$, where $U', V'$ are vertical sections ($S(U')=S(V')=0$). 

As $\Psi$ is a morphism of Lie algebroids, we have that 
$\Theta=({\mathcal L} \Psi)^*\Theta'$ also satisfies the conditions of Theorem \ref{crampin-alg}.
In fact, for every $Z\in {\mathcal L}^\tau E$
\begin{eqnarray*}
\langle {\mathcal L}_{\Gamma}\Theta, Z\rangle &=&
\rho^{\tau}(\Gamma)(\langle  ({\mathcal L}\Psi)^*\Theta', Z\rangle) -\langle \Theta',  {\mathcal L}\Psi([\Gamma, Z]^\tau)\rangle\\
&=&\rho^{{\tau}'}(\Gamma')(\langle \Theta', {\mathcal L}\Psi (Z)\rangle) -\langle \Theta',  [\Gamma', {\mathcal L}\Psi(Z)]^{\tau'}\rangle\\
&=&
\langle({\mathcal L} \Psi)^*({\mathcal L}_{\Gamma'}\Theta'), Z\rangle\, .
\end{eqnarray*}
Therefore,  ${\mathcal L}_{\Gamma}\Theta=({\mathcal L} \Psi)^*({\mathcal L}_{\Gamma'}\Theta')$ and also 
\[
{\mathcal L}_{\Gamma}\Omega=({\mathcal L} \Psi)^*({\mathcal L}_{\Gamma'}\Omega')=0\, .
\]
Moreover, for all $Z_1, Z_2\in {\mathcal L}^{\tau}E$ 
\begin{eqnarray*}
\Omega (S(Z_1), S(Z_2))&=&({\mathcal L}\Psi)^*\Omega' (S(Z_1), S(Z_2))
\\
&=&\Omega' ({\mathcal L}\Psi( S (Z_1)), {\mathcal L}\Psi( S (Z_2)))\\
&=&\Omega'(S'({\mathcal L}\Psi(Z_1)), S'( {\mathcal L}\Psi(Z_2)))\\
&=&0
\end{eqnarray*}
 using that 
${\mathcal L}\Psi\circ S=S'\circ {\mathcal L}\Psi$ (see~\cite{CoLeMaMa}).
This proves that if $\Gamma'$ is weak variational then  $\Gamma$ is also weak variational.
Obviously if $\Theta'$ is exact,  then $\Theta$ is also exact. Therefore, if $\Gamma'$ is variational then  $\Gamma$ is also variational.
\end{proof}

Now we write the converse to the previous result for the case of a fiberwise surjective morphism satisfying an extra assumption.

\begin{thm}
Let $\Psi: E\rightarrow E'$ 
be a fiberwise surjective morphism of Lie algebroids. Let $\Gamma$ and $\Gamma'$ be SODE sections on $E$ and $E'$ respectively such that
$
{\mathcal L} \Psi\circ \Gamma=\Gamma'\circ \Psi \, .
$
If $\Gamma$ is weak variational (variational) 
and it admits a solution  $\Theta$ of Theorem \ref{crampin-alg} such that $\Theta=({\mathcal L} \Psi)^*\Theta'$ for some $\Theta'\in\Gamma((\mathcal{L}^{\tau'}E')^{*})$, 
then $\Gamma'$ is weak variational (variational). 
\end{thm}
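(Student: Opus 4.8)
The plan is to push the data furnished by Theorem \ref{crampin-alg} for $\Gamma$ down to $E'$ along $\mathcal{L}\Psi$, and then apply Theorem \ref{crampin-alg} backwards to $\Gamma'$. Write $\psi\colon M\to M'$ for the base map of $\Psi$. Since $\Gamma$ is weak variational, Theorem \ref{crampin-alg} gives a nondegenerate section $\Omega=d^{\mathcal{L}^{\tau}E}\Theta$ with $\mathcal{L}_{\Gamma}\Omega=0$ and $\Omega(\tilde V_{\alpha},\tilde V_{\beta})=0$, and by hypothesis $\Theta=(\mathcal{L}\Psi)^{*}\Theta'$. I would set $\Omega':=d^{\mathcal{L}^{\tau'}E'}\Theta'$; since $\mathcal{L}\Psi$ is a morphism of Lie algebroids, $(\mathcal{L}\Psi)^{*}d^{\mathcal{L}^{\tau'}E'}=d^{\mathcal{L}^{\tau}E}(\mathcal{L}\Psi)^{*}$, hence $(\mathcal{L}\Psi)^{*}\Omega'=\Omega$. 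It then remains to verify the three conditions of Theorem \ref{crampin-alg} for the pair $(\Gamma',\Omega')$.

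The technical engine is that $\mathcal{L}\Psi\colon\mathcal{L}^{\tau}E\to\mathcal{L}^{\tau'}E'$ is fiberwise surjective (the prolongation of the fiberwise surjective morphism $\Psi$), and that $\Psi$ is surjective; consequently $(\mathcal{L}\Psi)^{*}$ is injective on sections of $\bigwedge^{\bullet}(\mathcal{L}^{\tau'}E')^{*}$, because a form whose pullback along a fiberwise surjective map over a surjection vanishes must itself vanish. With this, the three conditions transfer. First, $(\mathcal{L}\Psi)^{*}(\mathcal{L}_{\Gamma'}\Omega')=\mathcal{L}_{\Gamma}\bigl((\mathcal{L}\Psi)^{*}\Omega'\bigr)=\mathcal{L}_{\Gamma}\Omega=0$, the first equality being the naturality identity already used in the proof of Theorem \ref{red} (valid since $\Gamma$ and $\Gamma'$ are $\mathcal{L}\Psi$-related), so $\mathcal{L}_{\Gamma'}\Omega'=0$. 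Second, $\Omega'=d^{\mathcal{L}^{\tau'}E'}\Theta'$ by construction. Third, $\Omega'(\tilde V'_{\alpha},\tilde V'_{\beta})=0$: write $\tilde V'_{\alpha}=S'(\tilde T'_{\alpha})$, lift $\tilde T'_{\alpha}$ through $\mathcal{L}\Psi$, and use $\mathcal{L}\Psi\circ S=S'\circ\mathcal{L}\Psi$ together with $S^{2}=0$ to reduce the claim to $\Omega\bigl(S(\,\cdot\,),S(\,\cdot\,)\bigr)=0$, which holds because $\Omega$ annihilates vertical sections. Finally $\Omega'$ is nondegenerate: a nonzero radical vector $u'$ of $\Omega'_{b'}$ lifts through $\mathcal{L}\Psi$ to a nonzero $w\in(\mathcal{L}^{\tau}E)_{a}$ (for any $a\in\Psi^{-1}(b')$) lying in the radical of $\Omega_{a}$, a contradiction. (This last point in fact shows $\mathcal{L}\Psi$ is fiberwise injective, hence a fiberwise isomorphism, which forces $\mathrm{rank}\,E=\mathrm{rank}\,E'$ and makes $\Psi$ itself fiberwise bijective.) By Theorem \ref{crampin-alg}, $\Gamma'$ is weak variational.

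For the variational case I would use that when $\Gamma$ is variational the solution $\Theta$ can be taken to be the Poincar\'e--Cartan section $\theta_{L}=S(d^{\mathcal{L}^{\tau}E}L)$ of a local Lagrangian $L$, whose vertical component vanishes. Expressing $\mathcal{L}\Psi$ in the adapted frame $\{\tilde T_{\alpha},\tilde V_{\alpha}\}$, the vanishing of the vertical component of $\Theta=(\mathcal{L}\Psi)^{*}\Theta'$ forces that of $\Theta'$, and matching the $\tilde T$-components shows that, locally, $L$ coincides with $L'\circ\Psi$ up to the addition of a $\psi$-basic function on the base; then $\Gamma'$ is the Euler--Lagrange SODE of $L'$ — because $\Gamma,\Gamma'$ are $\mathcal{L}\Psi$-related and Euler--Lagrange dynamics is compatible with Lie algebroid morphisms — and hence variational. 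Equivalently, one checks that the section $\Theta_{\Gamma',F'}$ produced for $\Gamma'$ annihilates $\Gamma(\mbox{Ker}(\rho^{\tau'}))$, which, being also closed, is locally exact by Lemma \ref{lemma-sec}.

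The hard part is precisely this last annihilation condition. The three conditions of Theorem \ref{crampin-alg} transfer automatically because each is a tensorial identity that can be pulled back and then recognized downstairs via injectivity of $(\mathcal{L}\Psi)^{*}$; but $\Theta_{\Gamma',F'}(Z')=0$ for $Z'\in\Gamma(\mbox{Ker}(\rho^{\tau'}))$ is not of this type, since $\mathcal{L}\Psi$ maps $\Gamma(\mbox{Ker}(\rho^{\tau}))$ into, but in general strictly into, $\Gamma(\mbox{Ker}(\rho^{\tau'}))$ — for $TG\to\mathfrak{g}$ the former is $0$ while the latter is all of $\mathfrak{g}$ — so the vanishing of $\Theta_{\Gamma,F}$ on $\Gamma(\mbox{Ker}(\rho^{\tau}))$ gives no control over the missing directions. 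This is exactly where the hypothesis $\Theta=(\mathcal{L}\Psi)^{*}\Theta'$ — morally, the existence of a Lagrangian on $E$ that descends along $\Psi$ — is used in an essential way, and the coordinate computation of the previous paragraph is where it enters.
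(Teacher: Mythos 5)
Your treatment of the \emph{weak variational} case is correct and is essentially the paper's own argument: the paper only says that the proof ``follows the same lines'' as Theorem \ref{red} using fiberwise surjectivity, and your fleshing-out (injectivity of $(\mathcal{L}\Psi)^{*}$ on sections, the naturality identity $\,(\mathcal{L}\Psi)^{*}(\mathcal{L}_{\Gamma'}\Omega')=\mathcal{L}_{\Gamma}((\mathcal{L}\Psi)^{*}\Omega')$ for $\mathcal{L}\Psi$-related SODEs, the relation $\mathcal{L}\Psi\circ S=S'\circ\mathcal{L}\Psi$, and the radical-lifting argument for nondegeneracy of $\Omega'$) is exactly what that one-line reference amounts to.

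The \emph{variational} case is where your proposal has a genuine gap. First, you are not entitled to replace the hypothesized factored solution $\Theta=(\mathcal{L}\Psi)^{*}\Theta'$ by the Poincar\'e--Cartan section $\theta_{L}$: the factorization is assumed for \emph{some} solution of Theorem \ref{crampin-alg}, and the factored $\Theta$ need not be of that form (nor need $\theta_{L}$ factor). Second, and decisively, the step ``matching the $\tilde T$-components shows that $L$ coincides with $L'\circ\Psi$ up to a $\psi$-basic function on the base'' is false: matching those components only shows that the fibre derivatives $\partial L/\partial y^{\alpha}$ are $\Psi$-basic, so $L$ differs from a basic function by an \emph{arbitrary} function on $M$, which in general is not $\psi$-basic; such a base function changes the Euler--Lagrange dynamics, and it is precisely the obstruction. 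The paper's Example \ref{examples} realizes it: for $\Psi\colon TSE(2)\rightarrow\mathfrak{se}(2)$ the invariant SODE $\Gamma$ upstairs is variational with $L=\frac{1}{2}(\dot x^{2}+\dot y^{2}+\dot\theta^{2})+\theta$; its Poincar\'e--Cartan section $\theta_{L}=\dot x\,dx+\dot y\,dy+\dot\theta\,d\theta$ is invariant, hence does factor through $\mathfrak{se}(2)$ and is a solution of Theorem \ref{crampin-alg}, yet the reduced $\Gamma'$ is weak variational but \emph{not} variational. So an argument along your lines must break at exactly the point your last paragraph flags: the kernel-annihilation condition downstairs cannot be obtained from the data you use, because $\mathcal{L}_{\Gamma'}\Theta'$ is only seen to be closed, and Lemma \ref{lemma-sec} needs its vanishing on $\Gamma(\mbox{Ker}(\rho^{\tau'}))$, which your lifted vectors do not control. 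What actually makes the implication work (and what the paper's terse appeal to Theorem \ref{red} tacitly requires) is the descent of the \emph{exact} certificate itself, i.e.\ that $\Theta_{\Gamma,F}=d^{\mathcal{L}^{\tau}E}L$ with a $\Psi$-basic primitive, $L=L'\circ\Psi$ locally; then $\mathcal{L}_{\Gamma'}\Theta'$ is exact by injectivity of the pullback and the variational conclusion follows. Your closing paragraph correctly identifies the difficulty, but the coordinate argument you offer does not close it.
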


\begin{proof}
The proof follows the same lines that Theorem \ref{red} using that $\Psi$ is a fiberwise surjective morphism.
\end{proof}

\begin{remark}
See Example \ref{examples} for a case in which there is no section $\Theta'$ satisfying the property $\Theta=({\mathcal L} \Psi)^*\Theta'$.
\end{remark}

\section{The inverse problem for Atiyah algebroids}\label{Sec:Atiyah}

The theory developed in section~\ref{S:Lie} has a very interesting application when Atiyah algebroids are considered. We first review the main notions of Atiyah algebroids, see~\cite{LMM} and references therein for more details, and then we geometrically characterize the inverse problem on Atiyah algebroids.  As shown in~\cite{LMM} the Euler-Lagrange equations of a $G$-invariant Lagrangian can be reduced to Lagrange-Poincar\'e equations by using the morphism of Lie algebroids between $TQ$ and $TQ/G$ (see \cite{cendra}).
Thus the results in section~\ref{Sec:Morph} can be applied to establish some relationship between the inverse problem and its reduced version.

\subsection{Atiyah algebroid  associated to a principal  bundle}

Let $\pi: Q\rightarrow Q/G=M$ be a principal $G$-bundle and $\Phi: G\times Q\rightarrow Q$, $\Phi_g(q)=\Phi(g,q)$, the 
corresponding $G$-action. Denote by $\Phi^T: G\times TQ\rightarrow TQ$ the tangent lift of $\Phi$, that is, $\Phi^T_g=T\Phi_g$ for all $g\in G$. Now 
consider the quotient vector bundle
$\tau_{Q/ G}: TQ/G\rightarrow M$ whose space of sections $\Gamma(TQ/G)$ is identified with the $G$-invariant vector fields on $Q$. 

Let ${\mathfrak g}$ be the Lie algebra of $G$ and take the action of $G$ on $Q\times {\mathfrak g}$ given by
\[
\begin{array}{rcl}
G\times (Q\times {\mathfrak g}) &\longrightarrow& Q\times {\mathfrak g}\\
(g, (q, \xi))&\longmapsto& (\Phi_g(q), \hbox{Ad}_g(\xi)),\,
\end{array}
\]
where ${\rm Ad}\colon G\times {\mathfrak g} \rightarrow {\mathfrak g}$ is the adjoint representation of $G$ on ${\mathfrak g}$.
The quotient vector bundle $\tilde{\mathfrak g}=(Q\times {\mathfrak g})/G$ is called the adjoint bundle associated with 
the principal bundle $\pi\colon Q\rightarrow M$. 
If $\xi_Q$ is the infinitesimal generator of the action $\Phi$ associated with $\xi \in {\mathfrak g}$, that is, 
\[
\xi_Q(q)=\frac{d}{dt}\Big|_{t=0}\Phi(\hbox{exp}(t\xi), q),
\]
then we have the following monomorphism of vector bundles: 
\[
\begin{array}{rrcl}
j:&\tilde{\mathfrak g}&\longrightarrow& TQ/G\\
&[(q, \xi)]&\longmapsto&[\xi_Q(q)].
\end{array}
\]
Moreover, we have the following exact sequence called the Atiyah sequence~\cite{MAC}:
\[
0\longrightarrow \tilde{\mathfrak g}\xrightarrow{ \ j\  } TQ/G
\xrightarrow{\ [T\pi]\ }TM\longrightarrow 0 .
\]
Assume that we have a principal connection $A$ on $Q$, that is, $A: TQ\rightarrow {\mathfrak g}$ satisfying $A(\xi_Q(q))=\xi$ and  $A$ is equivariant with respect to the actions $\phi^{T}: G
\times TQ \to TQ$ and $Ad: G \times {\mathfrak g} \to {\mathfrak g}$. Every principal connection $A$ induces the following vector bundle
isomorphism over the identity:
\[
\begin{array}{rcl}
TQ/G&\longrightarrow & T(Q/G)\oplus {\tilde{\mathfrak g}}\\
\;[X_q]&\longmapsto& T_q\pi(X_q)\oplus [(q, A(X_q))] ,
\end{array}
\]
where $X_q\in T_qQ$. 
Therefore we have an identification $\Gamma(TQ/G)\cong {\mathfrak X}(M)\oplus \Gamma (\tilde{\mathfrak g})$, where $\Gamma (\tilde{\mathfrak g})$ is identified with the set of vector fields on $Q$ which are $\pi$-vertical and $G$-invariant. 
Let $B:TQ\oplus TQ\to {\mathfrak g}$ be the 
curvature of the connection $A$ in the principal bundle $\pi$. The Lie bracket 
$\lcf\cdot,\cdot\rcf$ on $\Gamma(TQ/G)\cong \Gamma(TM\oplus
\tilde{\mathfrak g})\cong {\mathfrak X}(M)\oplus \Gamma (\tilde{\mathfrak g})$ is defined as follows
\[
\lcf X\oplus \tilde{\xi},Y\oplus \tilde{\eta}\rcf=[X,Y] \oplus
([\tilde{\xi},\tilde{\eta}] +
[X^h,\tilde{\eta}]-[Y^h,\tilde{\xi}]-B(X^h,Y^h)),
\]
for $X,Y\in {\mathfrak X}(M)$ and $\tilde{\xi}, \tilde{\eta}\in \Gamma (\tilde{\mathfrak g}),$ where $X^h, Y^h\in {\mathfrak X}(Q)$ are the horizontal lift of $X$, $Y$, respectively,
via the principal connection $A$. The anchor map
$\rho:\Gamma (TQ/G)\cong {\mathfrak X}(M)\oplus\Gamma (\tilde{\mathfrak g})\to
{\mathfrak X}(M)$ is given by
\[
\rho(X\oplus \tilde{\xi})=X.
\]

Now we will give a local description. Let $U\times G$ be a local trivialization of the principal bundle $\pi:Q\to M$
 where $U$  is an open subset of $M$ with local coordinates $(x^i)$. Then we
consider the trivial principal bundle $\pi:U\times G\to U$, where the action of $G$ on $U\times G$ is given by left
multiplication on the second factor, that is, $\Phi_g(m, h)=(m, gh)$, where $m\in U$ and $g, h\in G$. For a basis 
$\{\xi_a\}$  of ${\mathfrak g}$, $1\leq a \leq n$,  we denote by $\{\lvec{\xi_a}\}$ the
corresponding left-invariant vector fields on $G$.
Then the principal connection is specified by coefficients $A^a_i(x)$ satisfying
\[
A\left(\frac{\partial }{\partial x^i}\Big|_{{(x,e)}}\right)=A_i^a(x)\xi_a,   \quad  1\leq i \leq m,
\]
where $x\in U$ and $e$ is the identity element of $G$. The horizontal lift
of a coordinate vector field  $\displaystyle\frac{\partial }{\partial x^i}$
on $U$ is the vector field $\left(\displaystyle\frac{\partial
}{\partial x^i}\right)^h$ on $U\times G$ given by
$
\displaystyle{\left(\frac{\partial }{\partial x^i}\right)^h=\frac{\partial }{\partial
x^i}-A_i^a(x)\lvec{\xi_a}}.
$
Thus, the vector fields on $U\times G$
\begin{equation}\label{basisUG}
\left\{e_i=\frac{\partial }{\partial x^i}-A_i^a\lvec{\xi_a},e_b=\lvec{\xi_b}\right\}
\end{equation}
are left $G$-invariant and define a local basis $\{e'_{i},
e'_{b}\}$ of $\Gamma(TQ/G)\cong {\mathfrak X}(M)\oplus
\Gamma(\tilde{\mathfrak g}).$ We will denote by $(x^i, y^i,y^b)$ the
corresponding fibered coordinates on $TQ/G$.

The curvature of the principal connection is given by 
\[
B\left(\displaystyle\frac{\partial
}{\partial x^i}\Big|_{(x,e)}, \displaystyle\frac{\partial }{\partial
x^j}\Big|_{(x,e)}\right)=B_{ij}^a(x)\xi_a,
\]
for $i,j\in \{1,\dots , m\}$ and $x\in U$. If $c_{ab}^c$ are the
structure constants of ${\mathfrak g}$ with respect to the basis
$\{\xi_a\}$, then
\[B_{ij}^c = \frac{\partial A_i^c}{\partial x^j}-\frac{\partial
A_j^c}{\partial x^i}-c_{ab}^cA_i^aA_j^b.
\]
Then for the previous local basis  $\{e'_i,e'_b\}$ of $\Gamma(TQ/G)$
we deduce that
\[
\lcf e'_i,e'_j\rcf=-B_{ij}^ce'_c,\;\;\; \lcf
e'_i,e'_a\rcf=c_{ab}^cA_{i}^be'_c, \; \; \lcf
e'_a,e'_b\rcf=c_{ab}^ce'_c,\;\;\;\]
 \[ \rho(e'_i)=\frac{\partial
}{\partial x^i}, \;\;\; \rho(e'_a)=0,\] for $i,j\in \{1,\dots
,m\}$ and $a,b\in \{1,\dots ,n\}.$ 
Thus, the local structure
functions of the Atiyah algebroid $\tau_{{Q}/G}: TQ/G\to M=Q/G$
with respect to the local coordinates $(x^i)$ and to the local
basis $\{e'_i,e'_a\}$ of $\Gamma(TQ/G)$ are
\begin{equation}\label{Aa}
\begin{array}{c}
C_{ij}^k= C_{ia}^j=-C_{ai}^j=
C_{ab}^i=0,\;\;\;C_{ij}^a=-B_{ij}^a,\;\;\;
C_{ia}^c=-C_{ai}^c=c_{ab}^cA_i^b,\;\;\;
C_{ab}^c=c_{ab}^c,\\
\rho_i^j=\delta_{ij},\;\;\; \rho_i^a=\rho_a^i=\rho_a^b=0.
\end{array}
\end{equation}

\subsection{The inverse problem for Atiyah algebroids}

In the case of an Atiyah algebroid the section $\Theta_{\Gamma, F}=\theta_{\alpha}\tilde{T}^{\alpha}+F_{\alpha} \tilde{V}^{\alpha}=\theta_i \tilde{T}^i+\theta_a \tilde{T}^a+F_i\tilde{V}^i+F_a\tilde{V}^a
 $ of $(\mathcal{L}^{\tau}E)^{*}\rightarrow E$ defined at the beginning of section \ref{inverse} has the following local components:
 \begin{eqnarray*}
 \theta_i&=&\frac{\partial F_
 i}{\partial x^j}y^j+\frac{\partial F_i}{\partial y^j}\Gamma^j+\frac{\partial F_i}{\partial y^a}\Gamma^a-B_{ij}^aF_ay^j
 +c_{ab}^cA^b_iF_cy^a,\\
 \theta_a&=&\frac{\partial F_a}{\partial x^j}y^j+\frac{\partial F_a}{\partial y^j}\Gamma^j+\frac{\partial F_a}{\partial y^b}\Gamma^b
 -c_{ab}^cA^b_iF_cy^i+c_{ab}^cF_cy^b.\\
 \end{eqnarray*}
In this case the Helmholtz conditions, given by $d^{{\mathcal L}^{\tau_{Q/G}}}\Theta_{\Gamma, F}=0$, are
\begin{align*}
\frac{\partial F_{\beta}}{\partial y^{\gamma}}&=\frac{\partial F_{\gamma}}{\partial y^{\beta}}\; , \\
\frac{\partial \theta_{j}}{\partial y^{\beta}}&=\frac{\partial F_{\beta}}{\partial y^{j}}\; , \quad\frac{\partial \theta_{b}}{\partial y^{\beta}}=0\; ,\\
\frac{\partial \theta_{i}}{\partial x^{j}}&
+\theta_{a}B^{a}_{ji}=
\frac{\partial \theta_{j}}{\partial x^{i}}\; , \quad  \frac{\partial \theta_{b}}{\partial x^{i}}=\theta_{a}c^{a}_{bd}A^d_i\; , \quad \theta_cc^c_{ab}=0\; , 
\end{align*}
compared with~\eqref{eq:HKer1} and~\eqref{eq:HKer2}. From the last two equations we conclude that $\partial \theta_b /\partial x^i=0$. Thus $\theta_b$ is a constant function. 

The extra condition for exactness of $ \Theta_{\Gamma, F}$ is given by
$$
\theta_a=\frac{\partial F_{a}}{\partial x^{i}}y^{i}+\frac{\partial F_{a}}{\partial y^{\beta}}\Gamma^{\beta}-c^d_{ab}A^b_iF_{d}y^{i}
+c^{d}_{ab}F_{d}y^{b}
=0\; ,
$$
since ${\rm Ker} \rho={\rm span} \{ e_a'\}$. Thus $\Theta_{\Gamma,F}$ is exact if $\theta_a(x,y)=\theta_a=0$. Recall that in the general Lie algebroid case from section~\ref{S:Lie} the condition of exactness was $\theta_a(x,y)=\theta_a(x)=0$.

\begin{remark}
Note that among the set of Helmholtz conditions (HC), the equations
$$
\frac{\partial \theta_{b}}{\partial y^{\beta}}=0, \quad \frac{\partial \theta_{b}}{\partial x^{i}}=\theta_{a}c^{a}_{bd}A^d_i \quad \mbox{and} \quad \theta_cc^c_{ab}=0
$$
are implied by the extra condition $\theta_a=0$. Then a set of necessary and sufficient conditions for variationality is
$$
\frac{\partial F_{\beta}}{\partial y^{\gamma}}=\frac{\partial F_{\gamma}}{\partial y^{\beta}}, \quad \frac{\partial \theta_{j}}{\partial y^{\beta}}=\frac{\partial F_{\beta}}{\partial y^{j}}, \quad \frac{\partial \theta_{i}}{\partial x^{j}}=\frac{\partial \theta_{j}}{\partial x^{i}}, \quad \theta_a=0,
$$
in contrast to Example \ref{ex:tangent} where HC are necessary and sufficient and Example \ref{ex:algebra} where most 
HC are implied by the extra condition on the kernel. In this example the kernel of $\rho$ is not trivial and neither the whole domain so we get less overlap between the two sets of conditions.
\end{remark}

\begin{remark}
If we are given a SODE on $TQ$ and a SODE on $TQ/G$ related as in Theorem \ref{red} then it is enough to solve the Helmholtz conditions on the Atiyah algebroid in order to get a Lagrangian on $TQ$ since on tangent bundles the notions of variational and weak variational coincide.
\end{remark}

\section{Conclusions and future developments}

The contributions of this paper include a characterization of the inverse problem of the calculus of variations on regular Lie algebroids using
Lagrangian submanifolds. One of the advantages of our approach is the easy adaptability to different cases. In particular, 
in future work we will study the following extensions: 
\begin{itemize}
\item The inverse problem for nonholonomic systems on Lie algebroids using isotropic submanifolds, similarly to the description 
on the tangent bundle given in~\cite{BFM}. 

\item We will carefully study the relationship between our techniques and hamiltonization of nonholonomic systems on Lie algebroids. 
This is useful to study invariance properties of the nonholonomic flow (preservation of a volume form, symmetries...).

\item Another interesting possibility is to extend our technique, always using Lagrangian and isotropic submanifolds,
now for Lie groupoids $G$~\cite{WEINST}. 
This case will be useful to study the inverse problem for discrete systems, that is, when a second-order difference equation 
can be derived as the  flow associated to the discrete Euler-Lagrange equations for a discrete Lagrangian 
$L_d: G\rightarrow \R$ (see \cite{2001MaWe}).
 \end{itemize}

\appendix

\section{Relation to other approaches} \label{approaches}

In section~\ref{inverse} we recover the Helmholtz conditions given in \cite{POP} as the vanishing of $d^{\mathcal{L}^{\tau}E}\Theta_{\Gamma,F}$ on a certain basis of sections of $\mathcal{L}^{\tau}E\longrightarrow E$.

In the previous section we worked in the basis $\left\{\tilde{T}_{\alpha},\tilde{V}_{\alpha}\right\}$ of local sections of $\mathcal{L}^{\tau}E$, constructed from a basis $\{e_{\alpha}\}$ of local sections of $E$. Another common basis of sections of $\mathcal{L}^{\tau}E$ is $\{e_{\alpha}^{C},e_{\alpha}^{V}\}$, the set of complete and vertical lifts of $\{e_{\alpha}\}$. The relationship between both is 
$$
\tilde{T}_{\alpha}=e_{\alpha}^{C}+C^{\gamma}_{\alpha\beta}y^{\beta}\tilde{V}_{\gamma} \quad \mbox{ and } \quad \tilde{V}_{\gamma}=e_{\gamma}^{V}.
$$

As in the tangent bundle case, a SODE on a Lie algebroid defines a connection (see \cite{POP}). Then the horizontal lift of a section $X\in\Gamma(E)$ can be defined from its complete and vertical lift and the SODE as
$$
X^{H}=\frac{1}{2}\left(X^{C}-[\Gamma,X^{V}]\right),
$$
and we get another basis $\{H_{\alpha}:=e_{\alpha}^{H},e_{\alpha}^{V}\}$ of sections of $\mathcal{L}^{\tau}E$. The relationship with the above is given by
$$
H_{\alpha}=e_{\alpha}^{H}=\tilde{T}_{\alpha}+\frac{1}{2}\left(\frac{\partial \Gamma^{\gamma}}{\partial y^{\alpha}}-C^{\gamma}_{\alpha\beta}y^{\beta} \right)\tilde{V}_{\gamma}=\tilde{T}_{\alpha}+\Lambda^{\gamma}_{\alpha}\tilde{V}_{\gamma}.
$$

Note that if $\Gamma$ is variational we have $\mathcal{L}_{\Gamma}F^{*}\lambda_{E}=\Theta_{\Gamma,F}$ for some local diffeomorphism $F$ and hence $\mathcal{L}_{\Gamma}d^{\mathcal{L}^{\tau}E}F^{*}\lambda_{E}=d^{\mathcal{L}^{\tau}E}\Theta_{\Gamma,F}$. Then the equations
$$
\mathcal{L}_{\Gamma}d^{\mathcal{L}^{\tau}E}F^{*}\lambda_{E}(H_{\eta},H_{\beta})=0, \quad \mathcal{L}_{\Gamma}d^{\mathcal{L}^{\tau}E}F^{*}\lambda_{E}(H_{\eta},\tilde{V}_{\beta})=0 \mbox{ and }  \mathcal{L}_{\Gamma}d^{\mathcal{L}^{\tau}E}F^{*}\lambda_{E}(\tilde{V}_{\eta},\tilde{V}_{\beta})=0,
$$
together with
$$
\mathcal{L}_{\Gamma}d^{\mathcal{L}^{\tau}E}F^{*}\lambda_{E}(H_{\eta},\tilde{V}_{\beta})-\mathcal{L}_{\Gamma}d^{\mathcal{L}^{\tau}E}F^{*}\lambda_{E}(H_{\beta},\tilde{V}_{\eta})=0
$$
yield the Helmholtz conditions given in \cite{POP}.

In order to check this we first compute $[\Gamma,\tilde{V}_{\eta}]$ and $[\Gamma,H_{\eta}]$ in terms of the basis $\{H_{\alpha},\tilde{V}_{\alpha}\}$:

$$
[\Gamma,\tilde{V}_{\eta}]=-\tilde{T}_{\eta}-\frac{\partial\Gamma^{\alpha}}{\partial y^{\eta}}\tilde{V}_{\alpha}=-(H_{\eta}-\Lambda_{\eta}^{\beta}\tilde{V}_{\beta})-\frac{\partial\Gamma^{\alpha}}{\partial y^{\eta}}\tilde{V}_{\alpha}=-H_{\eta}+\frac{1}{2}\left(C^{\gamma}_{\beta\eta}y^{\beta}-\frac{\partial \Gamma^{\gamma}}{\partial y^{\eta}}\right)\tilde{V}_{\gamma},
$$

\begin{eqnarray*}
[\Gamma,H_{\eta}]&=&[\Gamma,\tilde{T}_{\eta}]+[\Gamma,\Lambda^{\gamma}_{\eta}\tilde{V}_{\gamma}]=[\Gamma,\tilde{T}_{\eta}]+\rho(\Gamma)(\Lambda^{\gamma}_{\eta})\tilde{V}_{\gamma}+\Lambda^{\gamma}_{\eta}[\Gamma,\tilde{V}_{\gamma}]\\
&=& -\left( \rho^{i}_{\eta}\frac{\partial\Gamma^{\alpha}}{\partial x^{i}}\tilde{V}_{\alpha}+y^{\alpha}C^{\gamma}_{\eta\alpha}(H_{\gamma}-\Lambda^{\nu}_{\gamma}\tilde{V}_{\nu}) \right) + \left(y^{\alpha}\rho^{i}_{\alpha}\frac{\partial\Lambda^{\gamma}_{\eta}}{\partial x^{i}}+\Gamma^{\alpha}\frac{\partial\Lambda^{\gamma}_{\eta}}{\partial y^{\alpha}} \right)\tilde{V}_{\gamma}\\
& & + \Lambda^{\gamma}_{\eta}\left(-H_{\gamma}+\frac{1}{2}\left( C^{\nu}_{\beta\gamma}y^{\beta}-\frac{\partial\Gamma^{\nu}}{\partial y^{\gamma}} \right)\tilde{V}_{\nu}\right) = \frac{1}{2}\left(y^{\beta}C_{\beta\alpha}^{\gamma}-\frac{\partial \Gamma^{\gamma}}{\partial y^{\alpha}} \right)H_{\gamma} \\
& &+\left( y^{\alpha}\rho^{i}_{\alpha}\frac{\partial\Lambda^{\gamma}_{\eta}}{\partial x^{i}}+\Gamma^{\alpha}\frac{\partial\Lambda^{\gamma}_{\eta}}{\partial y^{\alpha}}+\Lambda^{\nu}_{\eta}\Lambda^{\gamma}_{\nu}-\Lambda^{\nu}_{\eta}\frac{\partial\Gamma^{\gamma}}{\partial y^{\nu}}-\rho^{i}_{\eta}\frac{\partial\Gamma^{\gamma}}{\partial x^{i}}+y^{\alpha}C^{\nu}_{\eta\alpha}\Lambda^{\gamma}_{\nu} \right)\tilde{V}_{\gamma}.
\end{eqnarray*}

We introduce the notation 
$$
D_{\eta}^{\gamma}:=\frac{1}{2}\left(y^{\beta}C_{\beta\alpha}^{\gamma}-\frac{\partial \Gamma^{\gamma}}{\partial y^{\alpha}} \right) \mbox{ and } \Phi^{\gamma}_{\eta}:=\left( y^{\alpha}\rho^{i}_{\alpha}\frac{\partial\Lambda^{\gamma}_{\eta}}{\partial x^{i}}+\Gamma^{\alpha}\frac{\partial\Lambda^{\gamma}_{\eta}}{\partial y^{\alpha}}+\Lambda^{\nu}_{\eta}\Lambda^{\gamma}_{\nu}-\Lambda^{\nu}_{\eta}\frac{\partial\Gamma^{\gamma}}{\partial y^{\nu}}-\rho^{i}_{\eta}\frac{\partial\Gamma^{\gamma}}{\partial x^{i}}+y^{\alpha}C^{\nu}_{\eta\alpha}\Lambda^{\gamma}_{\nu} \right)
$$
so that $[\Gamma,\tilde{V}_{\eta}]=-H_{\eta}+D^{\gamma}_{\eta}\tilde{V}_{\gamma}$ and $[\Gamma,H_{\eta}]=D_{\eta}^{\gamma}H_{\gamma}+\Phi^{\gamma}_{\eta}\tilde{V}_{\gamma}$.

We will also need the expression of $d^{\mathcal{L}^{\tau}E}F^{*}\lambda_{E}$ in terms of $\{\theta^{\alpha}:=\tilde{V}^{\alpha}-\Lambda_{\beta}^{\alpha}\tilde{T}^{\beta},\tilde{T}^{\alpha}\}$, the dual basis of $\{H_{\alpha},\tilde{V}_{\alpha}\}$:
$$
d^{\mathcal{L}^{\tau}E}F^{*}\lambda_{E}=\left(\rho(H_{\gamma})(F_{\alpha})-\frac{1}{2}F_{\nu}C^{\nu}_{\gamma\alpha} \right)\tilde{T}^{\gamma}\wedge \tilde{T}^{\alpha}+\frac{\partial F_{\alpha}}{\partial y^{\gamma}}\theta^{\gamma}\wedge \tilde{T}^{\alpha}=A_{\gamma\alpha}\tilde{T}^{\gamma}\wedge \tilde{T}^{\alpha}+\frac{\partial F_{\alpha}}{\partial y^{\gamma}}\theta^{\gamma}\wedge \tilde{T}^{\alpha}\,
$$
where $A_{\gamma\alpha}=\rho(H_{\gamma})(F_{\alpha})-\frac{1}{2}F_{\nu}C^{\nu}_{\gamma\alpha}$.

Now we introduce the notation $T_{F}:=d^{\mathcal{L}^{\tau}E}F^{*}\lambda_{E}$ and write the Helmholtz conditions in local coordinates as follows: 
\begin{eqnarray}
\mathcal{L}_{\Gamma}T_{F}(H_{\eta},H_{\beta})&=&\Gamma(T_{F}(H_{\eta},H_{\beta}))-T_{F}([\Gamma,H_{\eta}],H_{\beta})-T_{F}(H_{\eta},[\Gamma,H_{\beta}]) \nonumber \\
& = &\Gamma(A_{\eta\beta})-\Gamma(A_{\beta\eta})-\left[A_{\gamma\beta}D^{\gamma}_{\eta}-A_{\beta\gamma}D^{\gamma}_{\eta}+\frac{\partial F_{\beta}}{\partial y^{\gamma}}\Phi^{\gamma}_{\eta} \right] \nonumber \\
& & -\left[A_{\eta\gamma}D^{\gamma}_{\beta}-A_{\gamma\eta}D^{\gamma}_{\beta}-\frac{\partial F_{\eta}}{\partial y^{\gamma}}\Phi^{\gamma}_{\beta} \right] = 0 \label{H12}
\end{eqnarray}

\begin{eqnarray}
\mathcal{L}_{\Gamma}T_{F}(H_{\eta},\tilde{V}_{\beta})&=&\Gamma(T_{F}(H_{\eta},\tilde{V}_{\beta}))-T_{F}([\Gamma,H_{\eta}],\tilde{V}_{\beta})-T_{F}(H_{\eta},[\Gamma,\tilde{V}_{\beta}]) \nonumber \\
&=&\Gamma\left(-\frac{\partial F_{\eta}}{\partial y^{\beta}}\right)+\frac{\partial F_{\gamma}}{\partial y^{\beta}}D^{\gamma}_{\eta}-\left[-A_{\eta\beta}+A_{\beta\eta}-\frac{\partial F_{\eta}}{\partial y^{\gamma}}D^{\gamma}_{\beta} \right]=0 \label{H22}
\end{eqnarray}

\begin{eqnarray}
\mathcal{L}_{\Gamma}T_{F}(\tilde{V}_{\eta},\tilde{V}_{\beta})&=& -T_{F}([\Gamma,\tilde{V}_{\eta}],\tilde{V}_{\beta})-T_{F}(\tilde{V}_{\eta},[\Gamma,\tilde{V}_{\beta}]) \nonumber \\
&=& T_{F}(H_{\eta},\tilde{V}_{\beta})+T_{F}(\tilde{V}_{\eta},H_{\beta})=-\frac{\partial F_{\eta}}{\partial y^{\beta}}+\frac{\partial F_{\beta}}{\partial y^{\eta}}=0 \label{cross}
\end{eqnarray}

Now we compute
\begin{eqnarray*}
\mathcal{L}_{\Gamma}T_{F}(H_{\eta},\tilde{V}_{\beta})-\mathcal{L}_{\Gamma}T_{F}(H_{\beta},\tilde{V}_{\eta}) & = & \Gamma\left( -\frac{\partial F_{\eta}}{\partial y^{\beta}} \right)+\frac{\partial F_{\gamma}}{\partial y^{\beta}}D^{\gamma}_{\eta}-\left[-A_{\eta\beta}+A_{\beta\eta}-\frac{\partial F_{\eta}}{\partial y^{\gamma}}D^{\gamma}_{\beta}  \right] \nonumber \\
& & -\Gamma\left( -\frac{\partial F_{\beta}}{\partial y^{\eta}} \right) -\frac{\partial F_{\gamma}}{\partial y^{\eta}}D^{\gamma}_{\beta} +\left[ -A_{\beta\eta}+A_{\eta\beta}-\frac{\partial F_{\beta}}{\partial y^{\gamma}}D^{\gamma}_{\eta} \right] 
\end{eqnarray*}
and use (\ref{cross}) to obtain
\begin{equation}
A_{\eta\beta}=A_{\beta\eta}. \label{symm}
\end{equation} 

Substituting (\ref{symm}) into (\ref{H12}) and (\ref{H22}) these equations become
$$
\frac{\partial F_{\beta}}{\partial y^{\gamma}}\Phi^{\gamma}_{\eta}=\frac{\partial F_{\eta}}{\partial y^{\gamma}}\Phi^{\gamma}_{\beta} \quad \mbox{ and } \quad \Gamma\left(\frac{\partial F_{\eta}}{\partial y^{\beta}}\right)-\frac{\partial F_{\gamma}}{\partial y^{\beta}}D^{\gamma}_{\eta}-\frac{\partial F_{\eta}}{\partial y^{\gamma}}D^{\gamma}_{\beta}=0,
$$
which are the equations given in \cite{POP}. This can be checked directly by making the substitution
$$
\Lambda^{\nu}_{\eta}\frac{\partial\Gamma^{\gamma}}{\partial y^{\nu}}=2\Lambda^{\nu}_{\eta}\Lambda^{\gamma}_{\nu}+\Lambda^{\nu}_{\eta}C^{\gamma}_{\nu\tau}y^{\tau}.
$$
Beware that the notation in \cite{POP} is $N^{\gamma}_{\eta}=-\Lambda^{\gamma}_{\eta}$.

Note that the Helmholtz conditions for invariant Lagrangians on the tangent bundle of a Lie group $G$ given in \cite{CM2008} are also recovered. Indeed, by dropping the terms where derivatives with respect to $x^{i}$ appear and substituting $\Lambda^{\gamma}_{\eta}=\frac{\partial \Gamma^{\gamma}}{\partial y^{\eta}}-C^{\gamma}_{\eta\beta}y^{\beta}$ we get
$$
\Phi^{\gamma}_{\eta}=\frac{1}{2}\Gamma^{\alpha}\frac{\partial^{2}\Gamma^{\gamma}}{\partial y^{\alpha}\partial y^{\eta}}-\frac{1}{2}\Gamma^{\alpha}C^{\gamma}_{\eta\alpha}-\frac{1}{4}\frac{\partial \Gamma^{\nu}}{\partial y^{\eta}}\frac{\partial \Gamma^{\gamma}}{\partial y^{\nu}}-\frac{1}{4}C^{\nu}_{\eta\beta}y^{\beta}C^{\gamma}_{\nu\tau}y^{\tau}-\frac{1}{4}\frac{\partial \Gamma^{\nu}}{\partial y^{\eta}}C^{\gamma}_{\nu\tau}y^{\tau}+\frac{3}{4}C^{\nu}_{\eta\beta}y^{\beta}\frac{\partial\Gamma^{\gamma}}{\partial y^{\nu}}.
$$


\section*{Acknowledgments}

This work has been partially supported by MINECO (Spain) MTM2013-42870-P and MICINN (Spain)
MTM2010-21186-C02-01, MTM2010-21186-C02-02; 2009SGR1338 from the Catalan government; the European project IRSES-project GeoMech-246981 and the ICMAT Severo Ochoa project SEV-2011-0087.
MFP has been financially supported by a FPU scholarship from MECD.

\bibliographystyle{plain}
\bibliography{References}

\begin{thebibliography}{10}

\bibitem{BFM}
M.~Barbero-Li{\~n}{\'a}n, M.~{Farr{\'e} Puiggal{\'i}}, and D.~{Mart{\'i}n de
  Diego}.
\newblock Isotropic submanifolds and the inverse problem for mechanical
  constrained systems.
\newblock {\em J. Phys. A}, 48:045210 (35 pp), 2015.

\bibitem{cendra}
H.~Cendra, J.~E. Marsden, and T.~S. Ratiu.
\newblock Lagrangian reduction by stages.
\newblock {\em Mem. Amer. Math. Soc.}, 152(722):x+108, 2001.

\bibitem{CoLeMaMa}
J.~Cort{\'e}s, M.~de~Le{\'o}n, J.~C. Marrero, and E.~Mart{\'{\i}}nez.
\newblock Nonholonomic {L}agrangian systems on {L}ie algebroids.
\newblock {\em Discrete Contin. Dyn. Syst.}, 24(2):213--271, 2009.

\bibitem{81Crampin}
M.~Crampin.
\newblock On the differential geometry of the {E}uler-{L}agrange equations, and
  the inverse problem of {L}agrangian dynamics.
\newblock {\em J. Phys. A}, 14(10):2567--2575, 1981.

\bibitem{CM2008}
M.~Crampin and T.~Mestdag.
\newblock The inverse problem for invariant {L}agrangians on a {L}ie group.
\newblock {\em J. Lie Theory}, 18(2):471--502, 2008.

\bibitem{94CSMBP}
M.~Crampin, W.~Sarlet, E.~Mart{\'i}nez, G.~B. Byrnes, and G.~E. Prince.
\newblock Towards a geometrical understanding of {D}ouglas' solution of the
  inverse problem of the calculus of variations.
\newblock {\em Inverse Problems}, 10(2):245--260, 1994.

\bibitem{LMM}
M.~de~Le{\'o}n, J.C. Marrero, and E.~Mart{\'i}nez.
\newblock Lagrangian submanifolds and dynamics on {L}ie algebroids.
\newblock {\em J. Phys. A}, 38(24):R241--R308, 2005.

\bibitem{Douglas}
J.~Douglas.
\newblock Solution of the inverse problem of the calculus of variations.
\newblock {\em Trans. Amer. Math. Soc.}, 50:71--128, 1941.

\bibitem{RUI}
R.~L. Fernandes.
\newblock Lie algebroids, holonomy and characteristic classes.
\newblock {\em Adv. Math.}, 170(1):119--179, 2002.

\bibitem{Higgins}
P.J. Higgins and K.C.H. Mackenzie.
\newblock Algebraic constructions in the category of {L}ie algebroids.
\newblock {\em J. Algebra}, 129(1):194--230, 1990.

\bibitem{HIRSCH}
A.~Hirsch.
\newblock Die {E}xistenzbedingungen des verallgemeinerten kinetischen
  {P}otentials.
\newblock {\em Math. Ann.}, 50(2-3):429--441, 1898.

\bibitem{2007IMMP}
D.~Iglesias, J.~C. Marrero, D.~Mart{\'{\i}}n~de Diego, E.~Mart{\'{\i}}nez, and
  Edith Padr{\'o}n.
\newblock Reduction of symplectic {L}ie algebroids by a {L}ie subalgebroid and
  a symmetry {L}ie group.
\newblock {\em SIGMA Symmetry Integrability Geom. Methods Appl.}, 3:Paper 049,
  28, 2007.

\bibitem{Klein}
J.~Klein.
\newblock Espaces variationnels et m{\'e}canique.
\newblock {\em Ann. Inst. Fourier (Grenoble)}, 12:1--124, 1962.

\bibitem{96Liber}
P.~Libermann.
\newblock Lie algebroids and mechanics.
\newblock {\em Arch. Math. (Brno)}, 32(3):147--162, 1996.

\bibitem{MAC}
K.C.H. Mackenzie.
\newblock {\em General theory of {L}ie groupoids and {L}ie algebroids}, volume
  213 of {\em London Mathematical Society Lecture Note Series}.
\newblock Cambridge University Press, Cambridge, 2005.

\bibitem{2001MaWe}
J.~E. Marsden and M.~West.
\newblock Discrete mechanics and variational integrators.
\newblock {\em Acta Numer.}, 10:357--514, 2001.

\bibitem{Martinez}
E.~Mart{\'i}nez.
\newblock Lagrangian mechanics on {L}ie algebroids.
\newblock {\em Acta Appl. Math.}, 67(3):295--320, 2001.

\bibitem{POP}
L.~Popescu.
\newblock Metric nonlinear connections on {L}ie algebroids.
\newblock {\em Balkan J. Geom. Appl.}, 16(1):111--121, 2011.

\bibitem{Sonin1886}
N.J. Sonin.
\newblock About determining maximal and minimal properties of plane curves.
\newblock {\em Warsawskye Universitetskye Izvestiya}, 1-2:1--68, 1886.

\bibitem{Helmholtz1887}
H.~von Helmholtz.
\newblock Ueber die physikalische bedeutung des prinicips der kleinsten
  wirkung.
\newblock {\em Journal f{\"u}r die reine und angewandte Mathematik},
  100:137--166, 1887.

\bibitem{WEINST}
A.~Weinstein.
\newblock Lagrangian mechanics and groupoids.
\newblock In {\em Mechanics day ({W}aterloo, {ON}, 1992)}, volume~7 of {\em
  Fields Inst. Commun.}, pages 207--231. Amer. Math. Soc., Providence, RI,
  1996.

\end{thebibliography}
\end{document}